\def\sqr#1#2{{\vcenter{\vbox{\hrule height.#2pt
              \hbox{\vrule width.#2pt height#1pt \kern#1pt \vrule width.#2pt}
              \hrule height.#2pt}}}}
\def\signed #1{{\unskip\nobreak\hfil\penalty50
              \hskip2em\hbox{}\nobreak\hfil#1
              \parfillskip=0pt \finalhyphendemerits=0 \par}}
\def\endpf{\signed {$\sqr69$}}
\def\5n{\negthinspace \negthinspace \negthinspace \negthinspace \negthinspace }
\def\4n{\negthinspace \negthinspace \negthinspace \negthinspace }
\def\3n{\negthinspace \negthinspace \negthinspace }
\def\2n{\negthinspace \negthinspace }
\def\1n{\negthinspace }
\def\dbE{\mathbb{E}}
\def\dbF{\mathbb{F}}
\def\dbH{\mathbb{H}}
\def\dbP{\mathbb{P}}
\def\dbR{\mathbb{R}}
\def\sT{\mathscr{T}}
\def\of{\overline{f}}
\def\ds{\displaystyle}
\def\ns{\noalign{\ss}}
\def\a{\alpha}
\def\b{\beta}
\def\g{\gamma}
\def\d{\delta}
\def\e{\varepsilon}
\def\z{\zeta}
\def\l{\lambda}
\def\t{\tau}
\def\f{\varphi}
\def\th{\theta}
\def\o{\omega}
\def\p{\phi}
\def\i{\infty}
\def\G{\Gamma}
\def\D{\Delta}
\def\Om{\Omega}
\def\cA{{\cal A}}
\def\cB{{\cal B}}
\def\cC{{\cal C}}
\def\cF{{\cal F}}
\def\cZ{{\cal Z}}
\def\hY{{\hat Y}}
\def\hZ{{\hat Z}}
\def\hy{{\hat y}}
\def\hz{{\hat z}}
\def\bY{\bar{Y}}
\def\bZ{\bar{Z}}
\def\ba{\bar{a}}
\def\bc{\bar{c}}
\def\be{\bar{e}}
\def\bl{\bar{l}}
\def\bp{\bar{p}}
\def\br{\bar{r}}
\def\bt{\bar{t}}
\def\by{\bar{y}}
\def\bz{\bar{z}}
\def\no{\noindent}
\def\ss{\smallskip}
\def\ms{\medskip}
\def\q{\quad}
\def\qq{\qquad}
\def\hb{\hbox}
\def\ra{\mathop{\rightarrow}}
\def\rf{\eqref}
\def\sgn{\mathop{\rm sgn}}
\def\esssup{\mathop{\rm esssup}}
\def\cd{\cdot}
\def\sgn{\hbox{\rm sgn$\,$}}
\def\deq{\triangleq}
\def\les{\leqslant}
\def\ges{\geqslant}
\def\({\Big (}
\def\){\Big )}
\def\[{\Big[}
\def\]{\Big]}
\def\bde{\begin{definition}\label}
\def\ede{\end{definition}}
\def\be{\begin{equation}}
\def\bel{\begin{equation}\label}
\def\ee{\end{equation}}
\def\bt{\begin{theorem}\label}
\def\et{\end{theorem}}
\def\bc{\begin{corollary}\label}
\def\ec{\end{corollary}}
\def\bl{\begin{lemma}\label}
\def\el{\end{lemma}}
\def\bp{\begin{proposition}\label}
\def\ep{\end{proposition}}
\def\bas{\begin{assumption}\label}
\def\eas{\end{assumption}}
\def\br{\begin{remark}\label}
\def\er{\end{remark}}
\def\bex{\begin{example}\label}
\def\ex{\end{example}}
\def\ba{\begin{array}}
\def\ea{\end{array}}
\def\ed{\end{document}}
\def\ts{\times}
\def\olb\
\def\eps{\epsilon}
\def\square#1{\vbox{\hrule\hbox{\vrule height#1%
     \kern#1\vrule}\hrule}}
\def\rectangle#1#2{\vbox{\hrule\hbox{\vrule height#1%
     \kern#2\vrule}\hrule}}
\font\tenbb=msbm10 \font\sevenbb=msbm7 \font\fivebb=msbm5
\newtheorem{theorem}{Theorem}[section]
\newtheorem{definition}[theorem]{Definition}
\newtheorem{proposition}[theorem]{Proposition}
\newtheorem{corollary}[theorem]{Corollary}
\newtheorem{lemma}[theorem]{Lemma}
\newtheorem{remark}[theorem]{Remark}
\newtheorem{example}[theorem]{Example}
\newtheorem{assumption}[theorem]{Assumption}
\begin{document}

\title{\bf Anticipated backward stochastic differential equations with quadratic growth\thanks{This work is partial supported by Lebesgue Center of Mathematics ``Investissements d'avenir" program-ANR-11-LABX-0020-01, by CAESARS-ANR-15-CE05-0024 and by MFG-ANR16-CE40-0015-01, by Research Grants Council of Hong Kong under grants 15255416 and 15213218, by National Natural Science Foundation of China (Grant Nos. 11871309, 11371226).}}

\author{Ying Hu\thanks{Univ. Rennes, CNRS, IRMAR - UMR 6625, F-35000 Rennes, France (ying.hu@univ-rennes1.fr).}~,~~
Xun Li\thanks{Department of Applied Mathematics, The Hong Kong Polytechnic University, Hong Kong
(malixun@polyu.edu.hk).}~,~~
Jiaqiang Wen\thanks{Department of Mathematics, Southern University of Science and Technology,
Shenzhen, Guangdong, 518055, China (wenjq@sustech.edu.cn).}}
\maketitle

\no\bf Abstract:
\rm
In this paper, we study the solvability of anticipated backward stochastic differential equations (BSDEs, for short) with quadratic growth for one-dimensional case and multi-dimensional case. In these BSDEs, the generator $f(\cd)$, which is of quadratic growth in $Z_\cd$, involves not only the present information of solution $(Y_\cd,Z_\cd)$ but also its future one. The existence and uniqueness of such BSDEs, under different conditions, are derived for several terminal situations, including small terminal value, bounded terminal value and unbounded terminal value.

\ms

\no\bf Key words: \rm Anticipated backward stochastic differential equation, backward stochastic
differential equation, quadratic generator, time-advanced.

\ms

\no\bf AMS subject classifications. \rm 60H10, 60H30

\section{Introduction}

Let $(\Om,\cF,\dbF,\dbP)$ be a complete filtered probability space on which a $d$-dimensional standard Brownian motion $\{W_t\;;0\les t<\i\}$ is defined, where $\dbF=\{\cF_t\}_{t\ges0}$ is the natural filtration of $W$ augmented by all the $\dbP$-null sets in $\cF$.
Let $T>0$ be a time horizon, and $K\ges0$ be a constant. Consider the following backward stochastic differential equations (BSDEs, for short) over a finite horizon $[0,T+K]$:
\bel{1.1}\left\{\ba{ll}
\ds -dY_t=f\big(t,Y_t,Z_t,Y_{t+\delta(t)},Z_{t+\zeta(t)}\big)dt-Z_tdW_t,\q~t\in[0,T]; \\
\ns\ns\ds Y_t=\xi_t,\q~Z_t=\eta_t,\q~t\in[T,T+K],
\ea\right.\ee
where $\delta(\cdot)$ and $\zeta(\cdot)$ are two deterministic $\mathbb{R}^{+}$-valued continuous functions defined on $[0,T]$, and $\xi_\cd$ and $\eta_\cd$ are some given processes.
Such an equation is called an anticipated backward stochastic differential equation (ABSDE, for short) that appears as an adjoint process when dealing with optimal control problems under delayed systems.
The unknown processes, called an adapted solution of $(\ref{1.1})$, are the pair $(Y_\cd,Z_\cd)$ of $\dbF$-adapted processes taking values in $\dbR^m\ts\dbR^{m\ts d}$. We call $\xi_\cd$ the {\it terminal value} and $f(\cd)$ the {\it generator} of the corresponding BSDE \rf{1.1}. For convenience, hereafter, by a {\it quadratic BSDE}, we mean that in BSDE \rf{1.1}, the generator $f(\cd)$ grows in $Z_\cd$ superlinearly, but no more than quadratically; if $f(\cd)$ grows in $Z_\cd$ faster than quadratic, we call it a {\it super-quadratic BSDE}.
Meanwhile, we call $\xi_\cd$ the {\it small terminal value},
if there exists some constant $\e>0$ such that $\|\xi\|_\i\les \e$; we call $\xi_\cd$ the {\it bounded terminal value},
if $\xi_\cd\in L^{\i}_\dbF(T,T+K;\dbR^m)$; and we call $\xi_\cd$ the {\it unbounded terminal value}, if
$\xi_\cd\in S_\dbF^2(T,T+K;\dbR^m)$ (see Section 2 for the definition of the norm and spaces).

\ms

Recently, as a natural extension of BSDEs (see below for precise description), Peng and Yang \cite{Peng-Yang-09} study the equation \rf{1.1} and establish the existence and uniqueness of adapted solutions under the condition that $f(\cd)$ is uniformly Lipschitz in the last four arguments. Unfortunately, the comparison with Lipschitz generator fails to hold for the general ABSDE \rf{1.1} (see the counter Example 5.3 of \cite{Peng-Yang-09}). Since then, by regarding ABSDE as a new duality type of stochastic differential delay equations (SDDEs, for short), Chen and Wu \cite{Chen-Wu-10} derive the maximum principle for stochastic optimal control problem with delay. Along this line of research, Huang and Shi \cite{Huang-Shi-12} develop the maximum principle for optimal control of fully coupled forward-backward stochastic differential equations with delay. Chen, Wu and Yu \cite{Chen-Wu-Yu-12} discuss a delayed stochastic linear-quadratic (LQ, for short) control problem and show that the Riccati equation of this problem is a quadratic anticipated (ordinary) differential equation, which is a special form of quadratic ABSDE \rf{1.1}.
Recently, Sun, Xiong and Yong \cite{Sun-Xiong-Yong-18} consider the stochastic LQ optimal control problems with random coefficients using the stochastic Riccati equation, which is, in fact, the quadratic BSDEs. In theory, Wu, Wang and Ren \cite{Wu-Wang-Ren-12} establish the existence and uniqueness of ABSDE \rf{1.1} with non-Lipschitz coefficients,
 Wen and Shi \cite{Wen-Shi-17} and Douissi, Wen and Shi \cite{Douissi-Wen-Shi19} analyze the solvability of ABSDE driven by a fractional Brownian motion and its applications in optimal control problem.

\ms

Now, let us recall the following classical BSDE:
\bel{BSDE1}\left\{\2n\ba{ll}
\ds -dY_t =f(t,Y_t,Z_t)dt-Z_tdW_t,\q~t\in[0,T];\\
\ns\ns\ds Y_T=\xi_T.\ea\right.\ee
When $(Y_\cd,Z_\cd)\mapsto f(\cd,Y_\cd,Z_\cd)$ is linear, such an equation is initially formulated by Bismut \cite{Bismut-76} in the context of maximum principle for stochastic optimal controls. Pardoux and Peng \cite{Pardoux-Peng-90} firstly investigate the general nonlinear case of \rf{BSDE1}. Since then, BSDEs attract many researchers' interest. It stimulates some significant developments in many fields, such as partial differential equation (see Pardoux and Peng \cite{Pardoux-Peng-92}), stochastic optimal control (see Yong and Zhou \cite{Yong-Zhou-99}) and mathematical finance (see El Karoui, Peng and Quenez \cite{Karoui-Peng-Quenez-97}), to mention a few. Meanwhile, many efforts have been made to relax the assumptions on the generator $f(\cd)$ of BSDE \rf{BSDE1} for the existence and/or uniqueness of adapted solutions. For example, Lepeltier and San Martin \cite{Lepeltier-Martin-97} prove the existence of adapted solutions of one-dimensional BSDE when the generator $f(\cd)$ is continuous and of linear growth in $(Y_\cd,Z_\cd)$ (without Lipschitz condition). Kobylanski \cite{Kobylanski00} establishes the well-posedness of one-dimensional BSDE \rf{BSDE1} with
$f(\cd)$ growing quadratically in $Z_\cd$ and with bounded terminal condition. Recently, Tevzadze \cite{Tevzadze-08} revisits the existence and uniqueness of quadratic BSDEs by means of a fixed point argument. Briand and Hu \cite{Briand-Hu-06,Briand-Hu-08} prove the existence and uniqueness of quadratic BSDEs with unbounded terminal value. Delbaen, Hu and Bao \cite{Delbaen-Hu-Bao11} further study super-quadratic BSDEs.
Some other recent developments of quadratic BSDEs can be found in Bahlali, Eddahbi and Ouknine \cite{Bahali-Eddahbi-Ouknine-17}, Barrieu and El Karoui \cite{Briand-Karoui-13}, Cheridito and Nam \cite{Cheridito-Nam-14,Cheridito-Nam-15}, Hibon, Hu and Tang \cite{Hibon-Hu-Tang-17}, Hu and Tang \cite{Hu-Tang-16}, Richou \cite{Richou-12}, Wen and Yong \cite{Wen-Yong-18},
 Xing and Zitkovic \cite{Xing-Zitkovic}, and references cited therein.

\ms

As an important development of BSDEs, the ABSDEs with quadratic growth, also called the quadratic ABSDEs, have important applications in stochastic optimal control problems, especially in delayed stochastic LQ optimal control problems with random coefficients (see \cite{Chen-Wu-Yu-12,Sun-Xiong-Yong-18}). To our best knowledge, only Fujii and Takahashi \cite{Fujii-Takahashi-18} consider quadratic ABSDEs, and study the existence and uniqueness of a class of one-dimensional quadratic ABSDE when the generator $f(\cd)$ is independent of the anticipated term $Z_{\cd+\zeta(\cd)}$. However, for the general ABSDE \rf{1.1} with quadratic growth, there does not exist any fundamental result yet.
In this paper, we focus on this problem and study the solvability of ABSDE \rf{1.1} with quadratic growth among one-dimensional case and multi-dimensional case. Under different conditions, with different methods, we establish the solvability of quadratic ABSDEs with small terminal value, bounded terminal value and unbounded terminal value, respectively. To tackle the difficulty of lack of comparison principle, we use the John-Nirenberg inequality for BMO-martingale to obtain the solvability of quadratic ABSDE.

\ms

Firstly, for multi-dimensional ABSDE \rf{1.1}, we discuss the case when $f(\cd)$ is of quadratic growth in the last four arguments $(Y_\cd,Z_\cd,Y_{\cd+\d(\cd)},Z_{\cd+\zeta(\cd)})$  and $\xi_\cd$ has small terminal value. In this case, we obtain the existence and uniqueness of adapted solutions.
Secondly, for one-dimensional ABSDE \rf{1.1}, we study the case when $f(\cd)$ is of quadratic growth in $Z_\cd$ and superlinear growth in $Z_{\cd+\zeta(\cd)}$, and $\xi_\cd$ is a bounded random variable. For such a case, we get the existence and uniqueness of local adapted solutions. In addition, if $f(\cd)$ is bounded in $Z_{\cd+\zeta(\cd)}$, we derive the existence and uniqueness of global adapted solutions. Thirdly, when $\xi_\cd$ is a unbounded random variable, we study the solvability of one-dimensional ABSDE \rf{1.1} with quadratic growth. The case when the generator $f(\cd)$ is of bounded growth in $(Y_\cd,Y_{\cd+\d(\cd)},Z_{\cd+\zeta(\cd)})$ and of quadratic growth in $Z_\cd$ is considered, and the existence and uniqueness of adapted solutions for this situation obtained. Compared with Fujii and Takahashi \cite{Fujii-Takahashi-18}, conclusions shed light on are the fundamental difference between theirs and ours.

\ms

The rest of the paper is organized as follows. In Section 2, we present some preliminaries and some spaces which will be used in the following sections. In Section 3, the existence and uniqueness of quadratic ABSDEs with small terminal value are established. The solvability of quadratic ABSDEs with bounded terminal value is obtained in Section 4, and we get the existence and uniqueness of quadratic ABSDEs with unbounded terminal value in Section 5.

\section{Preliminaries}

Throughout this paper, and recall from the previous section, let $(\Om,\cF,\dbF,\dbP)$ be a complete filtered probability space on which a $d$-dimensional standard Brownian motion $\{W_t\;;0\les t<\i\}$ is defined, where $\dbF=\{\cF_t\}_{t\ges0}$ is the natural filtration of $W_\cd$ augmented by all the $\dbP$-null sets in $\cF$.
The notion $\dbR^{m\ts d}$ denotes the space of the $m\ts d$-matrix $C$ with Euclidean norm $|C|=\sqrt{tr(CC^*)}$.
Next, for any $t\in[0,T]$ and Euclidean space $\dbH$, we introduce the following spaces:
$$\ba{ll}
\ns\ds L^2_{\cF_t}(\Om;\dbH)=\Big\{\theta:\Om\to\dbH\bigm|\th\hb{ is $\cF_t$-measurable, }\|\th\|_2\deq\big(\dbE|\th|^2\big)^{1\over2}<\i\Big\},\\
\ea$$
$$\ba{ll}
\ns\ds L^\i_{\cF_t}(\Om;\dbH)=\Big\{\th:\Om\to\dbH\bigm|\th\hb{ is $\cF_t$-measurable, }
\|\th\|_{\i}\triangleq\esssup_{\o\in\Om}|\th(\omega)|<\i\Big\},\\
\ns\ds L_\dbF^2(t,T;\dbH)\1n=\1n\Big\{X:[t,T]\1n\times\1n\Om\to\dbH\bigm|X_\cd\hb{ is
$\dbF$-progressively measurable, }\\
\ns\ds\qq\qq\qq\qq\qq\qq\q~
\|X_\cd\|_{L_\dbF^2(t,T)}\deq\1n\(\dbE\int^T_t\1n|X_s|^2ds\)^{1\over2}\1n<\2n\i\Big\},\\
\ns\ds L_\dbF^\infty(t,T;\dbH)=\Big\{X:[t,T]\times\Om\to\dbH\bigm|X_\cd\hb{ is
$\dbF$-progressively measurable, }\\
\ns\ds\qq\qq\qq\qq\qq\qq\q~
\|X_\cd\|_{L_\dbF^\infty(t,T)}\deq\esssup_{(s,\o)\in[t,T]\times\Om}|X_s(\o)|<\i\Big\},\\
\ns\ds S_\dbF^2(t,T;\dbH)=\Big\{X:[t,T]\times\Om\to\dbH\bigm|X_\cd\hb{ is
$\dbF$-adapted, continuous, }\\
\ns\ds\qq\qq\qq\qq\qq\qq\q~
\|X_\cd\|_{S_\dbF^2(t,T)}\deq\Big\{\dbE\(\sup_{s\in[t,T]}|X_s|^2\)\Big\}^{\frac{1}{2}}<\i\Big\}.\ea$$
Let $M=(M_t,\cF_t)$ be a uniformly integrable martingale with $M_0=0$, and for $p\in[1,\i)$, we set
$$\|M\|_{BMO_p(\dbP)}\deq
\sup_\t\bigg\|\dbE_\t\Big[\big(\langle M\rangle_{\t}^{\i}\big)^{\frac{p}{2}}\Big]^{\frac{1}{p}}\bigg\|_{\i},$$
where the supremum is taken over all $\dbF$-stopping times $\t$, and $\dbE_\t$ is the conditional expectation given $\cF_{\t}$. The class $\big\{M: \|M\|_{BMO_p(\dbP)}<\i\big\}$ is denoted by $BMO_p(\dbP)$. Observe that $\| \cd \|_{BMO_p}$ is a norm on this space and $BMO_p(\dbP)$ is a Banach space. In the sequel, we denote $BMO(\dbP)$ the space of $BMO_2(\dbP)$ for simplicity. Next, for any $Z_\cd\in L^2_\dbF(0,T;\dbH)$, by Burkholder-Davis-Gundy's inequalities, one has
$$\ba{ll}
\ns\ds c_2\dbE_\t\[\(\int_\t^T|Z_s|^2ds\)\]\les\dbE_\t\[\sup_{t\in[\t,T]}
\Big|\int_\t^tZ_sdW_s\Big|^2\]\les C_2\dbE_\t\[\(\int_\t^T|Z_s|^2ds\)\],\ea$$
for some constants $c_2,C_2>0$. Thus,
$$\ba{ll}
\ns\ds c_2\sup_{\t\in\sT[t,T]}\Big\|\dbE_\t\[\(\int_\t^T|Z_s|^2ds\)\]\Big\|_\i
\les\sup_{\t\in\sT[t,T]}\Big\|\dbE_\t\[\sup_{t\in[\t,T]}\Big|\int_\t^tZ_sdW_s\Big|^2\]\Big\|_\infty\\
\ns\ds\qq\qq\qq\qq\qq\qq\qq\q \les C_2\sup_{\t\in\sT[t,T]}\Big\|\dbE_\t\[\(\int_\t^T|Z_s|^2ds\)\]\Big\|_\i,\ea$$
where $\sT[t,T]$ denotes the set of all $\dbF$-stopping times $\t$ valued in $[t,T]$. Note that the above could be infinite. Therefore, we introduce the following:
$$\cZ^2[t,T]=\Big\{Z_\cd\in L^2_\dbF(t,T;\dbH)\Bigm|\|Z\|_{\cZ^2[t,T]}\equiv\sup_{\t\in\sT[t,T]}\Big\|
\dbE_\t\[\int_\t^T|Z_s|^2ds\] \Big\|_\i^{1\over2}<\i\Big\}.$$
Recall that for $Z_\cd\in\cZ^2[0,T]$, the process $s\mapsto\int_0^sZ_rdW_r$ (denoted by $Z\cd W$), $s\in[0,T]$, is a {\it BMO-martingale}. Moreover, note that on $[0,T]$, $Z\cd W$ belongs to $BMO(\dbP)$ if and only if $Z\in \cZ^2[0,T]$, that is,
$$\|Z\cd W\|^2_{BMO(\dbP)}\equiv \|Z\|^2_{\cZ^2[0,T]}.$$
\begin{definition}\label{8.11.21.0}
A pair $(Y_\cd,Z_\cd)\in S^2_{\dbF}(0,T+K;\dbR^m)\times
L^2_\dbF(0,T+K;\dbR^{m\ts d})$ is called an {\it adapted solution} of BSDE \rf{1.1}, if $\dbP$-almost surely, it satisfies \rf{1.1}. In addition, if $(Y_\cd,Z_\cd)\in L^\infty_\dbF(0,T+K;\dbR^m)\times\cZ^2[0,T+K]$, it is called a bounded adapted solution.
\end{definition}

Now, we recall the following three propositions, which come from Hu and Tang \cite{Hu-Tang-16} with some minor modification. The first one is the existence, uniqueness and a priori estimate for one dimensional quadratic BSDE, and the second and third ones are interesting results concerning BMO-martingale, which play important roles in our subsequent arguments.
\begin{proposition}\label{2.1.13} \sl
Let $f:\Om\ts[0,T]\ts\dbR\ts\dbR^d\ra\dbR$ be an $\mathcal{F}_t$-adapted scalar-valued generator. Moreover, there exist constants $C>0$, $\a\in(0,1)$, $\b>0$ and $\g>0$ such that for any $t\in[0,T],y,\bar y\in\dbR,z,\bar z\in\dbR^d$, we have
$$\ba{ll}
\ds |f(t,y,z)|\les|g_t|+\b|y|+\frac{\g}{2}|z|^2,\\
\ns\ds |f(t,y,z)-f(t,\bar y,\bar z)|\les C\big(|y-\bar y|+(1+|z|+|\bar z|)|z-\bar z|\big),\\
\ea$$
where $g:\Om\ts[0,T]\ra\dbR$ is $\cF_t$-adapted and $|g_t|\les |H_t|^{1+\a}$ such that the stochastic integral $H\cdot W$ is a BMO-martingale.
 Then, for any bounded random variable $\xi\in L^{\i}_{\cF_T}(\Om;\dbR)$, the following BSDE
$$Y_t=\xi+\int_t^Tf(s,Y_s,Z_s)ds-\int_t^TZ_sdW_s,\q~ t\in[0,T],$$
has a unique solution $(Y_\cd,Z_\cd)$ such that $Y_\cd$ is bounded and $Z\cd W$ is a BMO-martingale. Moreover, the following estimate holds,
$$e^{\g|Y_t|}\les\dbE_t\[e^{\g e^{\b(T-t)}|\xi|+\g\int_t^T|g_s|e^{\b(s-t)}ds}\],\q~ \forall t\in[0,T].$$

\end{proposition}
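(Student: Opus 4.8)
\emph{Proof strategy.} The plan is to establish the statement in three steps, with the John--Nirenberg inequality for $BMO$-martingales serving as the common technical thread: (i) an a priori exponential estimate, which simultaneously yields the displayed bound, a uniform $L^\i$-bound on $Y$, and the fact that $Z\cd W\in BMO(\dbP)$; (ii) uniqueness, via linearization and a Girsanov change of measure legitimized by the $BMO$-property; and (iii) existence, via a fixed-point argument on short time intervals which are then patched together using the a priori bound. Throughout, the basic integrability input is that $|g_t|\les|H_t|^{1+\a}$ with $1+\a<2$: H\"older's inequality gives $\int_0^T|g_s|\,ds\les T^{(1-\a)/2}\big(\int_0^T|H_s|^2\,ds\big)^{(1+\a)/2}$, and since $H\cd W\in BMO$ the random variable $\int_0^T|H_s|^2\,ds$ has exponential moments (by John--Nirenberg); in particular $\dbE\big[\big(\int_0^T|g_s|\,ds\big)^2\big]<\i$, and, after writing $|g_t|\les\e|H_t|^2+C_\e$ with $\e$ small, $\dbE_\t\big[e^{\k\int_\t^T|g_s|\,ds}\big]$ is bounded uniformly in $\t$ for every fixed $\k$.

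\emph{Step (i).} Let $(Y_\cd,Z_\cd)$ be a bounded solution and fix $t\in[0,T]$. I would set $A_s=e^{\b(s-t)}$ and $V_s=A_s|Y_s|+\int_t^sA_r|g_r|\,dr$ for $s\in[t,T]$, so that $V_t=|Y_t|$ and $V_T=e^{\b(T-t)}|\xi|+\int_t^Te^{\b(s-t)}|g_s|\,ds$. By the Tanaka--It\^o formula $d|Y_s|=-\sgn(Y_s)f(s,Y_s,Z_s)\,ds+\sgn(Y_s)Z_s\,dW_s+dL_s$, where $L$ is the nonnegative local time of $Y$ at $0$, and combining with $|f|\les|g_s|+\b|Y_s|+\frac\g2|Z_s|^2$ gives $dV_s\ges-\frac\g2A_s|Z_s|^2\,ds+A_s\sgn(Y_s)Z_s\,dW_s+A_s\,dL_s$. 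Applying It\^o to $e^{\g V_s}$, the absolutely continuous part of the drift equals $\frac{\g^2}{2}A_s(A_s-1)|Z_s|^2\ges0$ (as $A_s\ges1$ on $[t,T]$) and the local-time part is nonnegative, so $e^{\g V_\cd}$ is a submartingale on $[t,T]$ (the needed integrability comes from the boundedness of $Y$ and the remark above); hence $e^{\g|Y_t|}=e^{\g V_t}\les\dbE_t[e^{\g V_T}]$, which is precisely the asserted estimate. Its right-hand side is uniformly bounded by that same remark, so $\|Y\|_\i\les M_0$ for some constant $M_0$. Finally, applying It\^o to $e^{\rho Y_s}+e^{-\rho Y_s}$ with $\rho>\g$ and absorbing the quadratic term into the second-order term yields $\dbE_\t\big[\int_\t^T|Z_s|^2\,ds\big]\les C\big(1+\dbE_\t\big[\int_\t^T|g_s|\,ds\big]\big)$, bounded uniformly in $\t$ by the same reasoning; thus $Z\in\cZ^2[0,T]$, i.e.\ $Z\cd W\in BMO(\dbP)$.

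\emph{Step (ii).} If $(Y^1,Z^1)$ and $(Y^2,Z^2)$ are bounded solutions with $Z^i\cd W\in BMO$, put $\d Y=Y^1-Y^2$, $\d Z=Z^1-Z^2$ (so $\d Y_T=0$). The local Lipschitz hypothesis lets me write $f(s,Y^1_s,Z^1_s)-f(s,Y^2_s,Z^2_s)=a_s\d Y_s+\lan b_s,\d Z_s\ran$ with $|a_s|\les C$ and $|b_s|\les C(1+|Z^1_s|+|Z^2_s|)$, so $b\cd W\in BMO$. By Kazamaki's criterion $\cE(b\cd W)$ is a uniformly integrable martingale; under the equivalent measure $\wt\dbP$ with density $\cE(b\cd W)_T$ the process $\wt W=W-\int_0^\cd b_s\,ds$ is Brownian motion, and $\d Y$ then solves a linear BSDE with bounded generator $a_s\d Y_s$ and zero terminal value driven by $\wt W$, which forces $\d Y\equiv0$ and hence $\d Z\equiv0$.

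\emph{Step (iii) and the main obstacle.} For small $\|\xi\|_\i$ I would argue by contraction: define $\Phi(y,z)=(\bar y,\bar z)$ by $\bar y_t=\dbE_t\big[\xi+\int_t^Tf(s,y_s,z_s)\,ds\big]$ with $\bar z$ read off from the martingale representation, and show --- using the $BMO$-estimates of Step (i) and the local Lipschitz bound --- that $\Phi$ maps a suitable small ball of $L^\i_\dbF(0,T;\dbR)\ts\cZ^2[0,T]$ into itself and is a contraction there; its fixed point is a bounded solution with $Z\cd W\in BMO$. For general bounded $\xi$, I would first invoke Step (i) to bound \emph{any} solution by $M_0$, then partition $[0,T]$ into subintervals $[t_i,t_{i-1}]$ of common length small enough --- depending only on $M_0$, $\b$, $\g$, $C$ and $\|H\cd W\|_{BMO}$ --- and solve successively on $[t_1,T]$ with terminal $\xi$, on $[t_2,t_1]$ with terminal $Y_{t_1}$ (again in the fixed ball of radius $M_0$), and so on, the short-interval solvability following from the same contraction applied to $Y-Y_{t_{i-1}}$, which is small on a short interval; concatenating the pieces gives the solution, and uniqueness is Step (ii). The hardest step will be the contraction estimate: one must dominate $\|\bar y^1-\bar y^2\|_\i+\|(\bar z^1-\bar z^2)\cd W\|_{BMO}$ by a factor $<1$ times $\|y^1-y^2\|_\i+\|(z^1-z^2)\cd W\|_{BMO}$, and since the generator is genuinely quadratic --- so the factor $1+|z^1|+|z^2|$ in the Lipschitz bound is \emph{not} small --- the contraction gain must come entirely from the short horizon, which requires tracking precisely how the $BMO$-norms of the $Z$-components decay with the interval length. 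This is exactly where the John--Nirenberg and energy inequalities for $BMO$-martingales, together with the local Lipschitz structure of $f$, become indispensable.
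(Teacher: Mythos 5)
Your Step (i) is a correct and self-contained derivation of the stated exponential estimate: the submartingale argument for $e^{\g V_s}$ with $V_s=e^{\b(s-t)}|Y_s|+\int_t^se^{\b(r-t)}|g_r|\,dr$ is exactly the right mechanism, and your justification of the integrability via $|g_t|\les\e|H_t|^2+C_\e$ and the John--Nirenberg inequality (choosing $\e$ so that $\g e^{\b T}\e\,\|H\cd W\|_{BMO}^2<1$) is sound; the ensuing $L^\i$-bound on $Y$ and the $BMO$-bound on $Z\cd W$ follow as you say. Step (ii) (linearization, $b\cd W\in BMO$, Girsanov, Gronwall) is the standard uniqueness proof and is correct. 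Note that the paper itself does not prove this proposition --- it imports it from Hu and Tang \cite{Hu-Tang-16} --- so the comparison here is against what a complete proof requires.

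The genuine gap is in Step (iii), and it sits precisely at the point you flag as hardest. For the Picard map $\Phi(y,z)=(\bar y,\bar z)$ with a genuinely quadratic generator, the contraction constant produced by the local Lipschitz bound is of order $C\big(\sqrt{T-t}+\|z^1\cd W\|_{BMO}+\|z^2\cd W\|_{BMO}\big)$, i.e.\ proportional to the radius $R$ of the ball in $L^\i_\dbF\ts\cZ^2$ on which you work. Your plan is to make this small by shrinking the time horizon, but the $BMO$-norm of the $Z$-component does \emph{not} decay with the interval length: already the martingale part of $\dbE_t[\xi]$ (the zeroth iterate) can have $\|Z\|_{\cZ^2[T-\e,T]}$ bounded away from $0$ uniformly in $\e$ --- take $\xi$ a bounded discontinuous function of $W_T$, whose representation density blows up near $T$. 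Likewise your own Step (i) bound $\dbE_\t\int_\t^{t_{i-1}}|Z_s|^2ds\les C\big(\dbE_\t[e^{\rho Y_{t_{i-1}}}]-e^{\rho Y_\t}\big)+\cds$ contains a submartingale increment that does not vanish as $t_{i-1}-t_i\to0$. So ``the contraction gain must come entirely from the short horizon'' is exactly the step that fails; time-splitting alone cannot produce a contraction for a quadratic driver. The known repairs are of a different nature: either Tevzadze's device of splitting the \emph{terminal value} $\xi=\sum_{i=1}^n\xi_i$ into pieces of small $L^\i$-norm and solving a cascade of BSDEs with generators recentered at the partial sums $\big(\sum_{j<i}Y^j,\sum_{j<i}Z^j\big)$ (each of which \emph{is} a small-ball fixed point, with constants controlled through the a priori bounds of your Step (i)), or Kobylanski's approximation/monotone-stability argument. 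Without one of these, your existence proof does not close for general bounded $\xi$; it only delivers the small-terminal-value case.
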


\begin{proposition}\label{2.1.11.0} \sl
For any $p\in[1,\i)$, there is a generic constant $L_p>0$ such that for any uniformly integrable martingale $M$,
$$\|M\|_{BMO_p(\dbP)}\les L_p\|M\|_{BMO(\dbP)}.$$
\end{proposition}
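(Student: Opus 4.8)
The plan is to reduce the inequality to a moment estimate for the quadratic variation and then interpolate. Since $\dbF$ is the augmented natural filtration of $W$, every uniformly integrable martingale $M$ is continuous, so $A:=\langle M\rangle$ is a continuous, adapted, non-decreasing process; for a stopping time $\t$ write $A_\t^\i:=A_\i-A_\t$, and set $c:=\|M\|_{BMO(\dbP)}^2=\sup_{\t}\big\|\dbE_\t[A_\t^\i]\big\|_\i$, which we may assume finite (otherwise there is nothing to prove). The first and main step is to establish, for every integer $n\ges1$ and every stopping time $\t$,
$$\dbE_\t\big[(A_\t^\i)^n\big]\les n!\,c^n\q\dbP\hbox{-a.s.}$$

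I would prove this by induction on $n$, the case $n=1$ being the definition of $c$. For the inductive step, the pathwise chain rule for the continuous non-decreasing process $A$ yields $(A_\t^\i)^n=n!\int\cdots\int_{\t\les s_1\les\cdots\les s_n}dA_{s_1}\cdots dA_{s_n}$. Taking $\dbE_\t$ and integrating the innermost (largest) variable out, $\int_{s_{n-1}}^\i dA_{s_n}=A_{s_{n-1}}^\i$; conditioning this factor on $\cF_{s_{n-1}}$ --- legitimate since, on the simplex $\{s_1\les\cdots\les s_{n-1}\}$, the remaining differential $dA_{s_1}\cdots dA_{s_{n-1}}$ is carried by an $\cF_{s_{n-1}}$-measurable process --- and bounding $\dbE_{s_{n-1}}[A_{s_{n-1}}^\i]\les c$, the $n$-fold integral is dominated by $c$ times the $(n-1)$-fold integral $\int\cdots\int_{\t\les s_1\les\cdots\les s_{n-1}}dA_{s_1}\cdots dA_{s_{n-1}}=\frac1{(n-1)!}(A_\t^\i)^{n-1}$. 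Hence $\dbE_\t[(A_\t^\i)^n]\les nc\,\dbE_\t[(A_\t^\i)^{n-1}]\les nc\cdot(n-1)!\,c^{n-1}=n!\,c^n$, closing the induction.

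Next I would pass to arbitrary real exponents. Given $p\in[1,\i)$, put $q:=\lceil p/2\rceil$, so $q\ges p/2$ and $(p/2)/q\les1$; the conditional Jensen inequality applied to the concave map $x\mapsto x^{(p/2)/q}$ gives $\dbE_\t[(A_\t^\i)^{p/2}]\les\dbE_\t[(A_\t^\i)^{q}]^{(p/2)/q}\les(q!\,c^{q})^{(p/2)/q}$, whence $\dbE_\t[(A_\t^\i)^{p/2}]^{1/p}\les(q!)^{1/(2q)}\,c^{1/2}=(q!)^{1/(2q)}\|M\|_{BMO(\dbP)}$. Taking the essential supremum in $\o$ and then the supremum over stopping times $\t$ yields the assertion with $L_p:=\big(\lceil p/2\rceil!\big)^{1/(2\lceil p/2\rceil)}$; for $p\in[1,2]$ this is just Jensen with $L_p=1$, and in general $L_p$ may be further bounded by an absolute multiple of $\sqrt p$ via Stirling's formula.

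The one genuinely delicate point is the conditioning step inside the $n$-fold Stieltjes integral: one must justify interchanging $\dbE_{s_{n-1}}$ with integration against $dA_{s_1}\cdots dA_{s_{n-1}}$ over the ordered region, which rests on the continuity and adaptedness of $A$ and on a monotone-class (or predictable-projection) argument for the associated random measure on $\Om\times[0,\i)^{n-1}$. An alternative that sidesteps the combinatorics is to invoke the John--Nirenberg inequality for $BMO$ martingales --- equivalently, the energy estimate $\dbE_\t[\exp(A_\t^\i)]\les(1-\|M\|_{BMO(\dbP)}^2)^{-1}$ valid when $\|M\|_{BMO(\dbP)}<1$ --- together with a scaling argument; both routes use the continuity of $M$, which is automatic in the present Brownian setting.
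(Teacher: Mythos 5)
Your argument is correct and is precisely the classical proof of this fact (the energy inequalities $\dbE_\t[(\langle M\rangle_\t^\i)^n]\les n!\,\|M\|_{BMO}^{2n}$ followed by conditional Jensen to interpolate to real $p$), which is what the reference the paper cites for this proposition ultimately relies on; the paper itself gives no proof, merely quoting the result from Hu and Tang. The one delicate point — justifying the conditioning at the intermediate time via the optional projection of $A_\i-A_s$ against the adapted random measure on the simplex — is correctly identified and resolvable exactly as you indicate, and your constant $L_p=(\lceil p/2\rceil!)^{1/(2\lceil p/2\rceil)}$ (so $L_p=1$ for $p\les 2$) is valid.
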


\begin{proposition}\label{2.1.11} \sl
For $\widetilde{K}>0$ and any one-dimensional BMO-martingale $N$ such that $\|N\|_{BMO(\dbP)}\les \widetilde{K}$.
There are constants $c_1>0$ and  $c_2>0$ depending only on $\widetilde{K}$ such that for any
BMO-martingale $M$, we have
\bel{2.1.12}c_1\|M\|_{BMO(\dbP)}\les\|\widetilde{M}\|_{BMO(\widetilde{\dbP})}\les c_2\|M\|_{BMO(\dbP)},\ee
where $\widetilde{M}\triangleq M-\langle M,N\rangle$ and $d\widetilde{\dbP}\triangleq\mathcal{E}(N)|_0^{\i}d\dbP$.
\end{proposition}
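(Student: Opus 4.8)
The plan is to read the change of measure as a Girsanov transform and to transport the $BMO$ bound across it by combining a conditional Hölder inequality with a reverse Hölder inequality for the stochastic exponential $\mathcal{E}(N)$. First I would collect the routine facts. Since $N\in BMO(\dbP)$ with $\|N\|_{BMO(\dbP)}\les\widetilde{K}$, the exponential $\mathcal{E}(N)$ is a uniformly integrable martingale, so $\widetilde{\dbP}$ is a probability measure equivalent to $\dbP$; by Girsanov's theorem $\widetilde{M}=M-\langle M,N\rangle$ is a continuous local martingale under $\widetilde{\dbP}$; and since $\widetilde{M}$ differs from $M$ only by a continuous process of finite variation, $\langle\widetilde{M}\rangle=\langle M\rangle$ as processes. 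Hence $\|\widetilde{M}\|_{BMO(\widetilde{\dbP})}^{2}=\sup_\tau\big\|\widetilde{\dbE}_\tau[\langle M\rangle_\infty-\langle M\rangle_\tau]\big\|_\infty$, and by the conditional Bayes rule $\widetilde{\dbE}_\tau[\,\cd\,]=\mathcal{E}(N)_\tau^{-1}\,\dbE_\tau[\mathcal{E}(N)_\infty\,\cd\,]$.

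For the right-hand inequality in \rf{2.1.12}, fix an $\dbF$-stopping time $\tau$ and set $A_\tau\deq\langle M\rangle_\infty-\langle M\rangle_\tau\ges0$. Because $\|N\|_{BMO(\dbP)}\les\widetilde{K}$, the reverse Hölder inequality for $BMO$ stochastic exponentials (Kazamaki; see also \cite{Hu-Tang-16}) provides an exponent $p=p(\widetilde{K})>1$ and a constant $C_p>0$ depending on $\widetilde{K}$ only, such that $\dbE_\tau\big[(\mathcal{E}(N)_\infty/\mathcal{E}(N)_\tau)^{p}\big]\les C_p$ for all $\tau$; let $q$ be the conjugate exponent. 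Then the conditional Hölder inequality, the definition of $\|\cd\|_{BMO_{2q}(\dbP)}$, and Proposition \ref{2.1.11.0} give
$$\widetilde{\dbE}_\tau[A_\tau]=\dbE_\tau\Big[\frac{\mathcal{E}(N)_\infty}{\mathcal{E}(N)_\tau}\,A_\tau\Big]\les C_p^{1/p}\big(\dbE_\tau[A_\tau^{\,q}]\big)^{1/q}\les C_p^{1/p}L_{2q}^{2}\,\|M\|_{BMO(\dbP)}^{2}.$$
Taking the supremum over $\tau$ and a square root yields $\|\widetilde{M}\|_{BMO(\widetilde{\dbP})}\les c_2\|M\|_{BMO(\dbP)}$ with $c_2\deq C_p^{1/(2p)}L_{2q}$, depending on $\widetilde{K}$ only.

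For the left-hand inequality I would exploit the symmetry of the set-up rather than redo the estimate. Put $\widetilde{N}\deq N-\langle N\rangle$; using the martingale representation one checks that $\widetilde{N}$ is the Girsanov transform of $N$ (a $\widetilde{\dbP}$-local martingale), that $d\dbP=\mathcal{E}(-\widetilde{N})|_0^{\infty}\,d\widetilde{\dbP}$, and, by bilinearity of the bracket and the vanishing of covariations with finite-variation processes, that $\widetilde{M}+\langle\widetilde{M},\widetilde{N}\rangle=M$. Applying the already established upper estimate first with $M$ replaced by $N$ shows $\|\widetilde{N}\|_{BMO(\widetilde{\dbP})}\les c_2(\widetilde{K})\,\widetilde{K}$; call this bound $\widetilde{K}'$, which depends on $\widetilde{K}$ only. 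Applying that same estimate once more, now in the configuration (base measure $\widetilde{\dbP}$, density exponent $-\widetilde{N}$ of $BMO(\widetilde{\dbP})$-norm at most $\widetilde{K}'$, target measure $\dbP$) and to the martingale $\widetilde{M}$, we obtain
$$\|M\|_{BMO(\dbP)}=\big\|\widetilde{M}+\langle\widetilde{M},\widetilde{N}\rangle\big\|_{BMO(\dbP)}\les c_2(\widetilde{K}')\,\|\widetilde{M}\|_{BMO(\widetilde{\dbP})},$$
so \rf{2.1.12} holds with $c_1\deq 1/c_2(\widetilde{K}')$, again depending on $\widetilde{K}$ only.

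The only genuinely non-elementary ingredient — and the step I expect to be the main obstacle — is the \emph{quantitative} reverse Hölder inequality for $\mathcal{E}(N)$: one needs not merely that $N\in BMO$ implies $\mathcal{E}(N)$ satisfies condition $(R_p)$ for some $p>1$, but that $p$ and the constant $C_p$ can be chosen \emph{uniformly} over the ball $\{\,\|N\|_{BMO(\dbP)}\les\widetilde{K}\,\}$. This is precisely Kazamaki's sharp criterion, whose admissibility threshold in $p$ tends to $\infty$ as $p\da1$; it is what forces all of the constants above to depend on $\widetilde{K}$ alone, the remaining work being bookkeeping with Hölder's inequality and Proposition \ref{2.1.11.0}.
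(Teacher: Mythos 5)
Your proof is correct and is essentially the standard argument behind this result: the paper itself does not prove Proposition \ref{2.1.11} but recalls it from Hu and Tang \cite{Hu-Tang-16}, where it is in turn Kazamaki's theorem on the invariance of $BMO$ under Girsanov transforms, proved exactly as you do — conditional Bayes plus conditional H\"older against the quantitative reverse H\"older inequality $(R_p)$ for $\mathcal{E}(N)$ (with $p$ and $C_p$ uniform over $\|N\|_{BMO(\dbP)}\les\widetilde{K}$), combined with $\|M\|_{BMO_{2q}}\les L_{2q}\|M\|_{BMO}$ for the upper bound, and the symmetry $d\dbP=\mathcal{E}(-\widetilde{N})|_0^{\infty}d\widetilde{\dbP}$, $\widetilde{M}+\langle\widetilde{M},\widetilde{N}\rangle=M$ for the lower bound. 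You also correctly isolate the one non-elementary ingredient, namely the uniformity of the reverse H\"older exponent and constant over the $BMO$ ball of radius $\widetilde{K}$.
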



\section{Multi-dimentional Case: Small Terminal Value}

In this section, we study multi-dimensional ABSDEs with quadratic growth and small terminal value.
As showed in the theory of ordinary differential equations (ODEs, for short), the equations may not have global solutions if its generator $f(\cd)$ is super-linear with respect to $Y_\cd$. However, it should be pointed out that, in case of small bounded value, the generator could be of quadratic growth with respect to $Y_\cd$ and $Z_\cd$. In detail, let us consider the following BSDE,
\bel{3.1}\left\{\ba{ll}
\ds -dY_t=f(t,Y_t,Z_t,Y_{t+\delta(t)},Z_{t+\zeta(t)}) dt - Z_{t} dW_{t},\q~t\in [0,T]; \\
\ns\ds Y_t=\xi_t,\q~Z_{t}=\eta_t,\q~t\in[T,T+K],
\ea\right.\ee
where $\delta(\cdot)$ and $\zeta(\cdot)$ are deterministic $\mathbb{R}^{+}$-valued continuous functions defined on $[0,T]$ satisfying the following two items:
\begin{enumerate}[~~\,\rm(i)]
\item [(i)] There exists a constant $K\ges 0$ such that
    \begin{equation}\label{14.55}
      t + \delta(t) \les T+K; \q~ t + \zeta(t) \les T+K, \q~\forall t\in[0,T].
    \end{equation}
  \item [(ii)] There exists a constant $L \ges 0$ such that for all nonnegative and integrable $h_\cd$,
   \begin{equation}\label{14.56}
     \int_t^T h_{s + \delta(s)} ds \les L \int_t^{T+K} h_s ds; \q~
     \int_t^T h_{s + \zeta(s)} ds \les L \int_t^{T+K} h_s ds, \q~ \forall t\in[0,T].
   \end{equation}
\end{enumerate}
Assume that for all $s\in[0,T]$, $f(s,\o,y,z,\xi,\eta):\Om\ts\dbR^m\ts\dbR^{m\ts d}\ts L^2_{\cF_r}(\Om;\dbR^m)\ts
L^2_{\cF_{\bar{r}}}(\Om;\dbR^{m\ts d})\ra L^2_{\cF_s}(\Om;\dbR^m)$,
where $r,\bar{r}\in[s,T+K]$, and $f(\cdot)$ satisfies the following condition:
\bas{3.2} \rm
Let $C$ be a positive constant. For all
$s\in[0,T]$, $y,\bar{y}\in\dbR^m,z,\bar{z}\in\dbR^{m\ts d}$,
$\xi_{\cd},\bar{\xi}_{\cd}\in L^2_{\dbF}(s,T+K;\dbR^m),\eta_{\cd},\bar{\eta}_{\cd}\in L^2_{\dbF}(s,T+K;\dbR^{m\ts d})$, $r,\bar{r}\in[s,T+K]$, we have $f(s,0,0,0,0)$ is bounded, and
$$\ba{ll}
\ds\big|f(s,y,z,\xi_r,\eta_{\bar{r}})-f(s,\bar{y},\bar{z},\bar{\xi}_r,\bar{\eta}_{\bar{r}})\big|
\les C\Big(|y|+|\bar{y}|+|z|+|\bar{z}|+\dbE_s\big[|\xi_r|+|\bar{\xi}_r|+|\eta_{\bar{r}}|+|\bar{\eta}_{\bar{r}}|\big]\Big)\\
\ns\ds\qq\qq\qq\qq\qq\qq\qq\qq \
\cd\Big(|y-\bar{y}|+|z-\bar{z}|+\dbE_s\big[|\xi_r-\bar{\xi}_r|+|\eta_{\bar{r}}-\bar{\eta}_{\bar{r}}|\big]\Big).
\ea$$
\eas

\bas{3.3} \rm
The given terminal conditions $\xi_\cd\in L_\dbF^\infty(T,T+K;\dbR^m)$ and $\eta_\cd\in \cZ^2[T,T+K]$.
\eas

\begin{example} \rm
Assumption \ref{3.2} implies that the generator $f(\cd)$ could be of quadratic growth with respect to the last four arguments. The following generator
$$\ba{ll}
\ds\qq\qq\qq\qq\q f(s,y,z,\xi_r,\eta_{\bar r})=y^2+z^2+\dbE_s\big[\xi^2_r+\eta^2_{\bar r}\big],\\
%
%
\ns\ds \forall s\in[0,T], y\in\dbR^m,z\in\dbR^{m\ts d},
\xi_{\cd}\in L^2_{\dbF}(s,T+K;\dbR^m),\eta_{\cd}\in L^2_{\dbF}(s,T+K;\dbR^{m\ts d}), r,\bar{r}\in[s,T+K]
\ea$$
satisfies such an assumption.
\end{example}

Next, we state and prove the main result of this section, which establishes the existence and uniqueness of BSDE \rf{3.1} with quadratic growth and small terminal value.

\bt{3.4}\sl Let  Assumptions \ref{3.2} and \ref{3.3} hold. There exists a small $\rho>0$ such that when
\bel{19.3.21.3}\|\xi_\cd\|^2_{L_\dbF^\infty(T,T+K)}+\|\eta_\cd\|^2_{\cZ^2[T,T+K]}
+\bigg\|\int_0^T|f(t,0,0,0,0)|dt\bigg\|^2_{\i}\les \rho^2,\ee
BSDE \rf{3.1} admits a unique adapted solution $(Y_\cd,Z_\cd)$ in $\cB_\rho$, where
$$\cB_\rho\deq\Big\{(Y_\cd,Z_\cd)\in L_\dbF^\infty(0,T+K;\dbR^m)\times \cZ^2[0,T+K]\Bigm|
\|Y_\cd\|^2_{L_\dbF^\infty(0,T+K)}+\|Z_\cd\|_{\cZ^2[0,T+K]}^2\les \rho^2\Big\}.$$
\et

\it Proof. \rm
The proof is divided into two steps.

\ms

\textbf{Step 1}.  We firstly consider the existence and uniqueness of the following BSDE
\bel{3.5.0}\left\{\ba{ll}
\ds -dY_t=\Big(f(t,Y_t,Z_t,Y_{t+\delta(t)},Z_{t+\zeta(t)})-f(t,0,0,0,0)\Big)dt-Z_tdW_t,\q~ t\in[0,T]; \\
\ns\ds Y_t=\xi_t, \q~  Z_{t}=\eta_t, \q~  t\in[T,T+K].
\ea\right.\ee
In order to solve the above equation, for every $(y_\cd,z_\cd)\in L_\dbF^\infty(0,T+K;\dbR^m)\times \cZ^2[0,T+K]$,
we define the mapping $(Y_\cd,Z_\cd)=\Gamma(y_\cd,z_\cd)$ by
\bel{3.5}\left\{\ba{ll}
\ds -dY_t=\Big(f(t,y_t,z_t,y_{t+\delta(t)},z_{t+\zeta(t)})-f(t,0,0,0,0)\Big)dt-Z_tdW_t,\q~ t\in[0,T]; \\
\ns\ds Y_t=\xi_t, \q~  Z_{t}=\eta_t, \q~  t\in[T,T+K].
\ea\right.\ee
Note that since $Y_{t}=\xi_t$ and $Z_{t}=\eta_{t}$ are given when $t\in[T,T+K]$,
we essentially need to prove the estimate on $[0,T]$. For \rf{3.5},
using It\^{o}'s formula to $|Y_\cd|^2$ on $[t,T]$, we obtain
$$\ba{ll}
\ds|Y_t|^2+\int_t^T|Z_r|^2dr=|\xi_T|^2
+\int_t^T2Y_r\cdot\Big(f(r,y_r,z_r,y_{r+\delta(r)},z_{r+\zeta(r)})-f(r,0,0,0,0)\Big)dr\\
\ns\ns\ds\qq\qq\qq\qq\q\ -2\int_t^TY_r\cdot Z_rdW_r.
\ea$$
Taking the conditional expectation and using the
inequality $2ab\les\frac{1}{2}a^2+2b^2$, we get
\bel{3.6}\ba{ll}
\ds |Y_t|^2+\dbE_{t}\int_t^T|Z_r|^2dr\\
\ns\ds\les \|\xi_\cd\|_{L_\dbF^\infty(T,T+K)}^2
+2\|Y_\cd\|_{L_\dbF^\infty(0,T)}
\Bigg(\dbE_{t}\int_t^T|f(r,y_r,z_r,y_{r+\delta(r)},z_{r+\zeta(r)})-f(r,0,0,0,0)|dr\Bigg)\\
\ns\ds\les\|\xi_\cd\|_{L_\dbF^\infty(T,T+K)}^2+\frac{1}{2}\|Y_\cd\|_{L_\dbF^\infty(0,T)}^2
+2\Bigg(\dbE_{t}\int_t^T|f(r,y_r,z_r,y_{r+\delta(r)},z_{r+\zeta(r)})-f(r,0,0,0,0)|dr\Bigg)^{2}.
\ea\ee
Since $\delta(\cd)$ and $\zeta(\cd)$ satisfy (i) and (ii), it follows from Assumption \ref{3.2} and Jensen's inequality that the last term of \rf{3.6} naturally reduces to
\bel{3.7}\ba{ll}
\ds \dbE_{t}\int_t^T|f(r,y_r,z_r,y_{r+\delta(r)},z_{r+\zeta(r)})-f(r,0,0,0,0)|dr\\
\ns\ds\les C\dbE_{t}\int_t^T\Big(|y_r|+|z_r|+\dbE_r\big[|y_{r+\delta(r)}|+|z_{r+\zeta(r)}|\big]\Big)^2dr\\
\ns\ds\les 4C\dbE_{t}\int_t^T\Big(|y_r|^2+|z_r|^2+\dbE_r\big[|y_{r+\delta(r)}|^2+|z_{r+\zeta(r)}|^2\big]\Big)dr\\
\ns\ds\les 4C(1+L)\dbE_{t}\int_t^{T+K}\big(|y_r|^2+|z_r|^2\big)dr,
\ea\ee
here $\dbE_{t}[\dbE_{r}[\ \cdot\ ]]=\dbE_{t}[\ \cdot\ ]$.
Hence, combining \rf{3.6} and \rf{3.7}, we can obtain
$$\ba{ll}
\ds \frac{1}{2}\|Y_\cd\|^2_{L_\dbF^\infty(0,T)}+\|Z_\cd\|_{\cZ^2[0,T]}^{2}\\
\ns\ds\les
\|\xi_\cd\|_{L_\dbF^\infty(T,T+K)}^2+2\esssup_{(t,\o)\in[0,T]\times\Om}
\bigg(4C(1+L)\dbE_{t}\int_t^{T+K}\big(|y_r|^2+|z_r|^2\big)dr\bigg)^{2}\\
\ns\ds\les\|\xi_\cd\|_{L_\dbF^\infty(T,T+K)}^2+64C^2(1+L)^2
\bigg((T+K)^2\|y_\cd\|_{L_\dbF^\infty(0,T+K)}^{4}+\|z_\cd\|_{\cZ^2[0,T+K]}^{4}\bigg).
\ea$$
Again, note that $Y_{t}=\xi_t$ and $Z_{t}=\eta_{t}$ when $t\in[T,T+K]$.
Then, it follows from the elementary inequality $a^2+b^2\les(|a|+|b|)^2$ that
$$\ba{ll}
\ds \|Y_\cd\|^{2}_{L_\dbF^\infty(0,T+K)}+\|Z_\cd\|_{\cZ^2[0,T+K]}^{2}\\
\ns\ds \les4\Big(\|\xi_\cd\|_{L_\dbF^\infty(T,T+K)}^2+\|\eta_\cd\|_{\cZ^2[T,T+K]}^{2}\Big)
+\b^2\Big(\|y_\cd\|_{L_\dbF^\infty(0,T+K)}^2+\|z_\cd\|_{\cZ^2[0,T+K]}^{2}\Big)^{2},
\ea$$
where $\b\deq 16C(1+L)\sqrt{(T+K)^2+1}$. Now, we can pick $R$ such that
$$4\Big(\|\xi_\cd\|_{L_\dbF^\infty(T,T+K)}^2+\|\eta_\cd\|_{\cZ^2[T,T+K]}^{2}\Big) + \b^2R^4\les R^2.$$
This inequality is solvable if and only if
\bel{19.3.21.1}\|\xi_\cd\|_{L_\dbF^\infty(T,T+K)}^2+\|\eta_\cd\|_{\cZ^2[T,T+K]}^{2}\les \frac{1}{16\b^2}.\ee
For example, we can take
$$R=\sqrt{8\Big(\|\xi_\cd\|_{L_\dbF^\infty(T,T+K)}^2+\|\eta_\cd\|_{\cZ^2[T,T+K]}^{2}\Big)}$$
in order to satisfy this quadratic inequality. Therefore the ball
$$\cB_R\deq\Big\{(Y_\cd,Z_\cd)\in L_\dbF^\infty(0,T+K;\dbR^m)\times \cZ^2[0,T+K]\Bigm|
\|Y_\cd\|^2_{L_\dbF^\infty(0,T+K)}+\|Z_\cd\|_{\cZ^2[0,T+K]}^2\les R^2\Big\}$$
is such that $\G(\cB_R)\subset\cB_R$.

\ms

\textbf{Step 2}. We prove that the mapping $\G$ is a contraction on $\cB_R$.

\ms

For every $(y_\cd,z_\cd)$, $(\by_\cd,\bz_\cd)\in\cB_R$, let $(Y_\cd,Z_\cd)=\G(y_\cd,z_\cd)$ and
$(\bY_\cd,\bZ_\cd)=\G(\by_\cd,\bz_\cd)$. For simplicity of presentation, denote
$$\hy_\cd=y_\cd-\by_\cd,\q~\hz_\cd=z_\cd-\bz_\cd,\q~\hY_\cd=Y_\cd-\bY_\cd,\q~\hY_\cd=Y_\cd-\bY_\cd.$$
Similar to the above discussion, and note that $\hY_t=0$ and $\hZ_t=0$ when $t\in[T,T+K]$, we get
$$\ba{ll}
\ds\frac{1}{2}\|\hY_\cd\|^{2}_{L_\dbF^\infty(0,T+K)}+\|\hZ_\cd\|_{\cZ^2[0,T+K]}^{2}
=\frac{1}{2}\|\hY_\cd\|^{2}_{L_\dbF^\infty(0,T)}+\|\hZ_\cd\|_{\cZ^2[0,T]}^{2}\\
\ns\ds\les2\esssup_{(t,\o)\in[0,T]\times\Om}\bigg[\dbE_{t}\int_t^T
\Big|f(r,y_r,z_r,y_{r+\delta(r)},z_{r+\zeta(r)})
-f(r,\by_r,\bz_r,\by_{r+\delta(r)},\bz_{r+\zeta(r)})\Big|dr\bigg]^{2}\\
\ns\ds\les 2C^2\esssup_{(t,\o)\in[0,T]\times\Om}\bigg[\dbE_{t}\int_t^T
\Big(|y_r|+|z_r|+|\by_r|+|\bz_r|+\dbE_r\big[|y_{r+\delta(r)}|+|z_{r+\zeta(r)}|
+|\by_{r+\delta(r)}|+|\bz_{r+\zeta(r)}|\big]\Big)\\
\ns\ds\qq\qq\qq\qq\qq\q\
\cd\Big(|\hy_r|+|\hz_r|+\dbE_r\big[|\hy_{r+\delta(r)}|+|\hz_{r+\zeta(r)}|\big]\Big)dr\bigg]^{2}\\
\ns\ds\les 2C^2\esssup_{(t,\o)\in[0,T]\times\Om}\bigg[\dbE_{t}\int_t^T
\Big(|y_r|+|z_r|+|\by_r|+|\bz_r|+\dbE_r\big[|y_{r+\delta(r)}|+|z_{r+\zeta(r)}|
+|\by_{r+\delta(r)}|+|\bz_{r+\zeta(r)}|\big]\Big)^2dr\\
\ns\ds\qq\qq\qq\qq~
\cd\dbE_{t}\int_t^T\Big(|\hy_r|+|\hz_r|+\dbE_r\big[|\hy_{r+\delta(r)}|+|\hz_{r+\zeta(r)}|\big]\Big)^2dr\bigg]\\
\ns\ds\les 64C^2(1+L)^2\esssup_{(t,\o)\in[0,T]\times\Om}\bigg[\dbE_{t}\int_t^{T+K}
\Big(|y_r|^2+|z_r|^2+|\by_r|^2+|\bz_r|^2\Big)dr\cd\dbE_{t}\int_t^{T+K}\Big(|\hy_r|^2+|\hz_r|^2\Big)dr\bigg]\\
\ns\ns\ds\les
64C^2(1+L)^2[(T+K)^2+1]
\Big[\|y_\cd\|^{2}_{L_\dbF^\infty(0,T+K)}+\|z_\cd\|^{2}_{\cZ^2[0,T+K]}+\|\by_\cd\|^{2}_{L_\dbF^\infty(0,T+K)}
+\|\bz_\cd\|^{2}_{\cZ^2[0,T+K]}\Big]\\
\ns\ns\ds\q\ \times\Big(\|\hy_\cd\|^{2}_{L_\dbF^\infty(0,T+K)}+\|\hz_\cd\|^{2}_{\cZ^2[0,T+K]}\Big).
\ea$$
Note that
$$\|y_\cd\|^{2}_{L_\dbF^\infty(0,T+K)}+\|z_\cd\|^{2}_{\cZ^2[0,T+K]}\les R^2,\qq
\|\by_\cd\|^{2}_{L_\dbF^\infty(0,T+K)}+\|\bz_\cd\|^{2}_{\cZ^2[0,T+K]}\les R^2,$$
we obtain
$$\|\hY_\cd\|^{2}_{L_\dbF^\infty(0,T+K)}+\|\hZ_\cd\|_{\cZ^2[0,T+K]}^{2}
\les MR^2\big(\|\hy_\cd\|^{2}_{L_\dbF^\infty(0,T+K)}+\|\hz_\cd\|^{2}_{\cZ^2[0,T+K]}\big),$$
where
$$M\deq 256C^2(1+L)^2[(T+K)^2+1].$$
Now we take
$$R=\sqrt{8\Big(\|\xi_\cd\|_{L_\dbF^\infty(T,T+K)}^2+\|\eta_\cd\|_{\cZ^2[T,T+K]}^{2}\Big)},$$
and let $R<\frac{1}{\sqrt{M}}$, which implies that
\bel{19.3.21.2}\|\xi_\cd\|_{L_\dbF^\infty(T,T+K)}^2+\|\eta_\cd\|_{\cZ^2[T,T+K]}^{2}<\frac{1}{8M},\ee
and $\G$ is a contraction on $\cB_R$. By the contraction principle, the mapping $\G$ admits a unique fixed point,
which is the solution of \rf{3.5.0}. Finally, we come back to BSDE \rf{3.1}. Note that \rf{19.3.21.2} is stronger than  \rf{19.3.21.1}. Now we define $\rho>0$ by letting
$$\rho^2\deq \frac{1}{16M}=\frac{1}{4096C^2(1+L)^2[(T+K)^2+1]},$$
then, when \rf{19.3.21.3} holds, we have that BSDE \rf{3.1} admits a unique adapted solution
$(Y_\cd,Z_\cd)\in\cB_\rho$. This completes the proof.
\endpf

\section{Scalar Case: Bounded Terminal Value}

In this section, we study the solvability of one-dimensional ABSDEs with quadratic growth and bounded terminal value.
That is in the following we shall assume that $m=1$. And for simplicity of presentation, we also let $d=1$. In particular, the case of local solution and the case of global solution are investigated respectively.

\subsection{Local Solution}

In this subsection, the existence and uniqueness of local adapted solution of BSDE \rf{1.1} are studied.
For simplicity, we rewrite it as follows:
$$\left\{\ba{ll}
\ds -dY_t=f\big(t,Y_t,Z_t,Y_{t+\delta(t)},Z_{t+\zeta(t)}\big)dt-Z_tdW_t,\q~t\in[0,T]; \\
\ns\ns\ds Y_t=\xi_t,\q~Z_t=\eta_t,\q~t\in[T,T+K],
\ea\right.$$
where, in this section, the deterministic continuous functions $\delta(\cdot)$ and $\zeta(\cdot)$ satisfy the following two items:
\begin{enumerate}[~~\,\rm(i)]
\item [(i)] There exists a constant $K\ges 0$ such that
    \begin{equation*}
      t + \delta(t) \les T+K; \q~ t + \zeta(t) \les T+K, \q~\forall t\in[0,T].
    \end{equation*}
  \item [(ii)] There exists a constant $L \ges 0$ such that for all nonnegative and integrable $h_\cd$,
   \begin{equation}\label{14.57}
     \int_t^T h_{s + \delta(s)} ds \les L \int_t^{T+K} h_s ds; \q~
     \int_t^T h_{s + \zeta(s)} ds \les L\int_t^{T+K} h_s ds, \q~ \forall t\in[0,T].
   \end{equation}
\end{enumerate}
%
%
Now, before further specifying, let us present the assumptions.
\bas{4.1.1} \rm
Assume that for all $s\in[0,T]$, $f(s,\o,y,z,\xi,\eta):\Om\ts\dbR\ts\dbR\ts L^2_{\cF_r}(\Om;\dbR)\ts
L^2_{\cF_{\bar{r}}}(\Om;\dbR)\ra L^2_{\cF_s}(\Om;\dbR)$, where $r,\bar{r}\in[s,T+K]$.
Let $C$ and $\g$ be positive constants and $\a\in[0,1)$. For all $s\in[0,T]$, $y,\bar{y},z,\bar{z}\in\dbR$,
$\xi_{\cd},\bar{\xi}_{\cd},\eta_{\cd},\bar{\eta}_{\cd}\in L^2_{\dbF}(s,T+K;\dbR)$, we have
$$\ba{ll}
\ds |f(s,y,z,\xi_r,\eta_{\bar{r}})|\les C\Big(1+|y|+\dbE_s\big[|\xi_r|+|\eta_{\bar{r}}|^{1+\a}\big]\Big)+\frac{\g}{2}|z|^2;\\
\ns\ds
|f(s,y,z,\xi_r,\eta_{\bar{r}})-f(s,\bar{y},\bar{z},\bar{\xi}_r,\bar{\eta}_{\bar{r}})|\\
\ns\ds
\les C\bigg\{|y-\bar{y}|+\dbE_s\big[|\xi_r-\bar{\xi}_r|\big]
+\big(1+\dbE_s\big[|\eta_{\bar{r}}|^{\a}+|\bar{\eta}_{\bar{r}}|^{\a}\big]\big)
\cd\dbE_s\big[|\eta_{\bar{r}}-\bar{\eta}_{\bar{r}}|\big]\\
\ns\ds\qq~
+\big(1+|z|+|\bar z|\big)|z-\bar z|\bigg\}.
\ea$$
\eas

\bas{4.1.2} \rm
The given terminal value $\xi_\cd \in L_\dbF^\infty(T,T+K;\dbR)$ and $\eta_\cd\in\cZ^2[T,T+K]$.
\eas

Note that, for BSDE \rf{1.1}, when $t\in[T,T+K]$, the values of $(Y_\cd,Z_\cd)$ are determined by the values of $(\xi_\cd,\eta_\cd)$. Now, we introduce the following space, in which the values of the elements are determined by the values of $(\xi_\cd,\eta_\cd)$ when $t\in[T,T+K]$,
$$\cA(0,T+K)\deq\{(U_\cd,V_\cd)\in L_\dbF^\infty(0,T+K;\dbR)\ts \cZ^2[0,T+K]~\big|~
U_t=\xi_t,\q V_t=\eta_t, \q t\in[T,T+K]\}.$$

\begin{example} \rm
Assumption \ref{4.1.1} implies that $f(\cd)$ is of linear growth with respect to $y$ and $\xi_\cd$, and of sub-quadratic growth with respect to $\eta_\cd$, and of quadratic growth with respect to $z$. For instance, for $\a\in[0,1)$, the following generator satisfies such an assumption:
$$\ba{ll}
\ds f(s,y,z,\xi_r,\eta_{\bar r})=1+|y|+|z|^2+\dbE_s\big[|\xi_r|+|\eta_{\bar r}|^{1+\a}\big],\\
\ns\ds \qq  \forall s\in[0,T], \q y,z\in\dbR, \q \xi_{\cd},\eta_{\cd}\in L^2_{\dbF}(s,T+K;\dbR), \q r,\bar{r}\in[s,T+K].
\ea$$
\end{example}

In this subsection, the main result is the following theorem, which concerns the solvability of local adapted solution of BSDE \rf{1.1}.

\bt{4.1.3}\sl
Under Assumptions \ref{4.1.1} and \ref{4.1.2},
there exist some positive constant $\e$ and a bounded set $\mathcal{B}_{\e}$ such that, in the interval $[T-\e,T+K]$, the equation \rf{1.1} admits a unique local adapted solution $(Y_\cd,Z_\cd)\in\cB_{\e}$.
\et

\begin{remark} \rm
The bounded set $\mathcal{B}_{\e}$ appears in Theorem \ref{4.1.3} is a product space $L_\dbF^\infty(0,T+K;\dbR)\ts \cZ^2[0,T+K]$ restricted on the time interval $[T-\e,T+K]$. See \rf{19.4.11.1} below for its detailed definition.
\end{remark}

\it The Proof of Theorem \ref{4.1.3}. \rm
Theorem \ref{4.1.3} is proved using the contraction mapping principle in three steps. In step 1, we shall construct a mapping in a Banach space, which we call it the quadratic solution mapping. In step 2, we shall show that the above constructed mapping is stable in a small ball. In step 3, we shall prove that this mapping is a contraction.

\subsubsection*{Step 1: Construction of the mapping}

For a pair of adapted process $(U_\cd,V_\cd)\in \cA(0,T+K)$, we consider the following quadratic BSDE,
\bel{4.2.1.1}\left\{\ba{ll}
\ds -dY_t=f(t,Y_t,Z_t,U_{t+\delta(t)},V_{t+\zeta(t)}) dt - Z_{t} dW_{t}, \q~ t\in [0,T]; \\
\ns\ds Y_t=\xi_t, \q~  Z_{t}=\eta_t, \q~  t\in[T,T+K].
\ea\right.\ee
Since when $t\in[T,T+K]$, the values of $(Y_\cd,Z_\cd)$ are determined by the values of $(\xi_\cd,\eta_\cd)$,
so we essentially need to investigate \rf{4.2.1.1} in $[0,T]$.
For simplicity, we rewrite \rf{4.2.1.1} in $[0,T]$ as the following integral form,
\bel{4.2.1.2}Y_t=\xi_T+\int_t^Tf(s,Y_s,Z_s,U_{s+\delta(s)},V_{s+\zeta(s)}) ds -\int_t^T Z_{s} dW_{s},\q~ t\in[0,T].\ee
Note that $\xi_T$ is a random variable, which represents the value of the process $\xi_t$ at $t=T$.
As
$$\big|f(s,y,z,U_{s+\delta(s)},V_{s+\zeta(s)})\big|
\les C\Big(1+|y|+\dbE_s[|U_{s+\delta(s)}|]+\dbE_s\big[|V_{s+\zeta(s)}|^{1+\a}\big]\Big)+\frac{\g}{2}|z|^2,$$
in view of Proposition \ref{2.1.13}, we see that BSDE \rf{4.2.1.2} admits a unique adapted solution $(Y_\cd,Z_\cd)$, where $Y_\cd$ is a bounded process and $Z\cd W$ is a BMO-martingale. Define the quadratic solution mapping $\G:(U_\cd,V_\cd)\mapsto\G(U_\cd,V_\cd)$ as follows,
$$\G(U_\cd,V_\cd)\triangleq(Y_\cd,Z_\cd),\qq \forall(U_\cd,V_\cd)\in \cA(0,T+K).$$
This is a transformation in the Banach space $L_\dbF^\infty(0,T+K;\dbR)\ts \cZ^2[0,T+K]$.

\ms
Hereafter, for simplicity of presentation, we denote
\bel{4.2.1.5}\|\xi_\cd\|_\i\deq\|\xi_\cd\|_{L_\dbF^\infty(T,T+K)},\qq
\|\eta_\cd\|_{\cZ^2}\deq\|\eta_\cd\|_{\cZ^2[T,T+K]}.\ee
Introduce some constants and a quadratic (algebraic) equation which will be adopted in the following subsections. Set
%
\bel{4.2.1.4}\left\{\ba{ll}
\ds C_\d\triangleq e^{\frac{3\g}{1-\a} e^{CT}C(T+K)+\frac{1}{L}\frac{1-\a}{2}\Big(\frac{3\g}{1-\a} CLe^{CT}\Big)^{\frac{2}{1-\a}}\Big(\frac{1+\a}{2\d}\Big)^{\frac{1+\a}{1-\a}}(T+K)},\\
\ns\ns\ds \ \b\triangleq\frac{1}{2}(1-\a)C^{\frac{2}{1-\a}}\Big(2L(1+\a)\Big)^{\frac{1+\a}{1-\a}},\\
\ns\ns\ds  \mu_1\triangleq(1-\a)\bigg(1+\frac{1-\a}{(1+\a)\g}\bigg)=1-\a+\frac{(1-\a)^2}{(1+\a)\g},\\
\ns\ns\ds  \mu_2\triangleq\frac{1}{2}(1+\a)
\bigg(1+\frac{1-\a}{(1+\a)\g}\bigg)=\frac{1+\a}{2}+\frac{1-\a}{2\g},\\
\ns\ns\ds \ \mu\triangleq(\b+C\mu_1)\g^{\frac{2}{\a-1}}+C\mu_2(1+L),\\
\ns\ns\ds\ \tilde{\mu}\deq\g^{-2}+C\mu_2LK.
\ea\right.\ee
Consider the following standard quadratic equation in variable $A$:
$$\ba{ll}
\ds \d A^2-\Big[1+4\Big(\tilde{\mu}e^{\frac{2\g}{1-\a}\|\xi_{\cd}\|_{\i}}+\frac{1}{4}\|\eta_\cd\|^2_{\cZ^2}\Big)\d\Big]A\\
\ns\ns\ds +4\Big(\tilde{\mu}e^{\frac{2\g}{1-\a}\|\xi_{\cd}\|_{\i}}+\frac{1}{4}\|\eta_\cd\|^2_{\cZ^2}\Big)
+4\mu C_{\d}e^{\big(\frac{3\g}{1-\a}e^{CT}(1+CLK)\|\xi_\cd\|_{\i}+\d\|\eta_\cd\|^2_{\cZ^2}\big)}\e=0.
\ea$$
The discriminant of the above equation is
$$\ba{ll}
\ds \Delta\deq \Big[1+4\Big(\tilde{\mu}e^{\frac{2\g}{1-\a}\|\xi_{\cd}\|_{\i}}+\frac{1}{4}\|\eta_\cd\|^2_{\cZ^2}\Big)\d\Big]^2\\
\ns\ns\ds\qq\ -4\d\Big[4\Big(\tilde{\mu}e^{\frac{2\g}{1-\a}\|\xi_{\cd}\|_{\i}}+\frac{1}{4}\|\eta_\cd\|^2_{\cZ^2}\Big)
+4\mu C_{\d}e^{\big(\frac{3\g}{1-\a}e^{CT}(1+CLK)\|\xi_\cd\|_{\i}
+\d\|\eta_\cd\|^2_{\cZ^2}\big)}\e\Big]\\
\ns\ds \ \ \
=\Big[1-4\Big(\tilde{\mu}e^{\frac{2\g}{1-\a}\|\xi_{\cd}\|_{\i}}+\frac{1}{4}\|\eta_\cd\|^2_{\cZ^2}\Big)\d\Big]^2
-16\d\mu C_{\d}e^{\big(\frac{3\g}{1-\a}e^{CT}(1+CLK)\|\xi_\cd\|_{\i}.
+\d\|\eta_\cd\|^2_{\cZ^2}\big)}\e.\\
\ea$$
Taking
\bel{4.2.11}\left\{\ba{ll}
\ds \d\triangleq\frac{1}{8}\Big(\tilde{\mu}e^{\frac{2\g}{1-\a}\|\xi_{\cd}\|_{\i}}+\frac{1}{4}\|\eta_\cd\|^2_{\cZ^2}\Big)^{-1},\qq
\varepsilon\les\min\bigg\{\frac{e^{-CT}}{3CL},
\frac{\tilde{\mu}e^{\frac{2\g}{1-\a}\|\xi_{\cd}\|_{\i}}+\frac{1}{4}\|\eta_\cd\|^2_{\cZ^2}}
{8\mu C_{\d}e^{\big(\frac{3\g}{1-\a}e^{CT}(1+CLK)\|\xi_\cd\|_{\i}.
+\d\|\eta_\cd\|^2_{\cZ^2}\big)}}
  \bigg\},\\
\ns\ns\ds A\triangleq \frac{\Big[1+4\Big(\tilde{\mu}e^{\frac{2\g}{1-\a}\|\xi_{\cd}\|_{\i}}+\frac{1}{4}\|\eta_\cd\|^2_{\cZ^2}\Big)\d\Big]-\sqrt{\Delta}}{2\d}=\frac{3-2\sqrt{\D}}{4\d}\les \frac{3}{4\d}=6\Big(\tilde{\mu}e^{\frac{2\g}{1-\a}\|\xi_{\cd}\|_{\i}}+\frac{1}{4}\|\eta_\cd\|^2_{\cZ^2}\Big),
\ea\right.\ee
we have
\bel{4.2.1.6}\ba{ll}
\ds \D\ges 0,\qq 1-\d A=\frac{1+2\sqrt{\D}}{4},\\
\ns\ds \Big(\tilde{\mu}e^{\frac{2\g}{1-\a}\|\xi_{\cd}\|_{\i}}+\frac{1}{4}\|\eta_\cd\|^2_{\cZ^2}\Big)
+\mu C_{\d}\frac{e^{\big(\frac{3\g}{1-\a}e^{CT}(1+CLK)\|\xi_\cd\|_{\i}
+\d\|\eta_\cd\|^2_{\cZ^2}\big)}}{1-\d A}\varepsilon
+\frac{1}{4}A=\frac{1}{2}A.
\ea\ee

\ms

In this section, we discuss the existence and uniqueness of quadratic BSDE \rf{1.1} over the time interval $[T-\e,T+K]$.
We shall prove Theorem \ref{4.1.3} using the fact that the quadratic solution mapping $\G$ is a contraction
on the closed convex set $\mathcal{B}_{\e}$ defined by
\bel{19.4.11.1}\ba{ll}
\ds\mathcal{B}_{\e}\triangleq\bigg\{  (U_\cd,V_\cd)\in\cA(T-\e,T+K)~\Big|~
e^{\frac{2\g}{1-\a}\|U_\cd\|_{L_\dbF^\infty(T-\e,T)}}\les C_{\d}
\frac{e^{\big(\frac{3\g}{1-\a}e^{CT}(1+CLK)\|\xi_\cd\|_{\i}
+\d\|\eta_\cd\|^2_{\cZ^2}\big)}}{1-\d A},\\
\ns\ds\qq\qq\qq\qq\qq\qq\qq\qq\q~\|V_\cd\|^2_{\cZ^2[T-\e,T]}\les A\bigg\}.
\ea\ee
Again, note that in $[T,T+K]$, the values of $(Y_\cd,Z_\cd)$ are determined by $(\xi_\cd,\eta_\cd)$ respectively, therefore we essentially need to prove that \rf{1.1} has a unique adapted solution on $[T-\e,T]$.

\subsubsection*{Step 2: Estimates of the quadratic solution mapping}

Let us prove the following assertion: $\G(\mathcal{B}_{\e})\subset\mathcal{B}_{\e}$, that is,
\bel{4.2.2.1}\G(U_\cd,V_\cd)\in\mathcal{B}_{\e},\qq \forall (U_\cd,V_\cd)\in\mathcal{B}_{\e}.\ee
In order to do this, the proof is divided into the following three steps.

\ms

$\emph{Step 2.1 Exponential transformation.}$

\ms

\ms

Define
$$\p(y)\triangleq\gamma^{-2}[e^{\g|y|}-\g|y|-1],\qq y\in\dbR.$$
Then, for $y\in\dbR$,
\bel{4.2.2.3}\p'(y)=\g^{-1}[e^{\g|y|}-1]\sgn(y),\qq
\p''(y)=e^{\g|y|},\qq
\p''(y)-\g|\p'(y)|=1.\ee
Using It\^{o}'s formula to \rf{4.2.1.2}, we have for $t\in[T-\e,T]$,
\bel{4.2.2.2}\ba{ll}
\ds \p(Y_t)+\frac{1}{2}\dbE_t\int_t^T|Z_s|^2ds\\
\ns\ds\les \p(\|\xi_\cd\|_{\i})+C\dbE_t\int_t^T|\p'(Y_s)|
\Big(2+|Y_s|+\dbE_s\big[|U_{s+\d(s)}|\big]+\dbE_s\big[|V_{s+\zeta(s)}|^{1+\a}\big]\Big)ds.
\ea\ee
Using the following inequality, together with the definition of $\b$ in \rf{4.2.1.4},
$$C|\p'(Y_s)|\dbE_s\big[|V_{s+\zeta(s)}|^{1+\a}\big]
\les\b|\p'(Y_s)|^{\frac{2}{1-\a}}+\frac{1}{4L}\dbE_s\big[|V_{s+\zeta(s)}|^2\big],$$
we obtain
\bel{4.2.2.4}\ba{ll}
\ds \p(Y_t)+\frac{1}{2}\dbE_t\int_t^T|Z_s|^2ds\\
\ns\ds\les \p(\|\xi_\cd\|_{\i})+C\dbE_t\int_t^T|\p'(Y_s)|\big(2+|Y_s|+\dbE_s[|U_{s+\d(s)}|]\big)ds\\
\ns\ds \ \ \ +\b\dbE_t\int_t^T|\p'(Y_s)|^{\frac{2}{1-\a}}ds+\frac{1}{4L}\dbE_t\int_t^T\dbE_s[|V_{s+\zeta(s)}|^2]ds\\
\ns\ds\les \p(\|\xi_\cd\|_{\i})+C\dbE_t\int_t^T|\p'(Y_s)|\big((1+|Y_s|)+(1+\dbE_s[|U_{s+\d(s)}|])\big)ds\\
\ns\ds \ \ \ +\b\dbE_t\int_t^T|\p'(Y_s)|^{\frac{2}{1-\a}}ds+\frac{1}{4L}\dbE_t\int_t^T|V_{s+\zeta(s)}|^2ds.
\ea\ee
Based on the following inequality for $x>0$,
$$1+x\les \bigg(1+\frac{1-\a}{\g(1+\a)}\bigg)e^{\frac{\g(1+\a)}{1-\a}x},$$
we get
\bel{4.2.2.5}\ba{ll}
\ds C\dbE_t\int_t^T|\p'(Y_s)|\Big((1+|Y_s|)+(1+\dbE_s[|U_{s+\d(s)}|])\Big)ds\\
\ns\ds\les C\dbE_t\int_t^T|\p'(Y_s)|\bigg(1+\frac{1-\a}{\g(1+\a)}\bigg)
\bigg(e^{\frac{\g(1+\a)}{1-\a}|Y_s|}+e^{\frac{\g(1+\a)}{1-\a}\dbE_s[|U_{s+\d(s)}|]}\bigg)ds.
\ea\ee
It follows from Young's inequality that we have
\bel{4.2.2.6}\ba{ll}
\ds |\p'(Y_s)|\bigg(e^{\frac{\g(1+\a)}{1-\a}|Y_s|}+e^{\frac{\g(1+\a)}{1-\a}\dbE_s[|U_{s+\d(s)}|]}\bigg)\\
\ns\ds\les (1-\a)|\p'(Y_s)|^{\frac{2}{1-\a}}
+\frac{1+\a}{2}\bigg(e^{\frac{2\g}{1-\a}|Y_s|}+e^{\frac{2\g}{1-\a}\dbE_s[|U_{s+\d(s)}|]}\bigg).
\ea\ee
According to the definition of $\mu_1$ and $\mu_2$ defined in \rf{4.2.1.4}, we get
\bel{4.2.2.7}\ba{ll}
\ds C\dbE_t\int_t^T|\p'(Y_s)|\Big((1+|Y_s|)+(1+\dbE_s[|U_{s+\d(s)}|])\Big)ds\\
\ns\ds\les C\mu_1\dbE_t\int_t^T|\p'(Y_s)|^{\frac{2}{1-\a}}ds
+C\mu_2\dbE_t\int_t^T\bigg(e^{\frac{2\g}{1-\a}|Y_s|}+e^{\frac{2\g}{1-\a}\dbE_s[|U_{s+\d(s)}|]}\bigg)ds.
\ea\ee
Now combining \rf{4.2.2.3} and \rf{4.2.2.4}-\rf{4.2.2.7} yields
$$\ba{ll}
\ds \p(Y_t)+\frac{1}{2}\dbE_t\int_t^T|Z_s|^2ds\\
\ns\ds\les \p(\|\xi_\cd\|_{\i})+(\b+C\mu_1)\dbE_t\int_t^T|\p'(Y_s)|^{\frac{2}{1-\a}}ds\\
\ns\ds\ \ \ +C\mu_2\dbE_t\int_t^T\bigg(e^{\frac{2\g}{1-\a}|Y_s|}+e^{\frac{2\g}{1-\a}\dbE_s[|U_{s+\d(s)}|]}\bigg)ds
+\frac{1}{4L}\dbE_t\int_t^T|V_{s+\zeta(s)}|^2ds\\
\ns\ds\les \p(\|\xi_\cd\|_{\i})
+\Big[\g^{\frac{2}{\a-1}}(\b+C\mu_1)+C\mu_2\Big]\dbE_t\int_t^Te^{\frac{2\g}{1-\a}|Y_s|}ds\\
\ns\ds\ \ \ +C\mu_2\dbE_t\int_t^Te^{\frac{2\g}{1-\a}\dbE_s[|U_{s+\d(s)}|]}ds
+\frac{1}{4L}\dbE_t\int_t^T|V_{s+\zeta(s)}|^2ds.
\ea$$
Note that $U_t=\xi_t$ and $V_t=\eta_t$ when $t\in[T,T+K]$, we have
$$\ba{ll}
\ds C\mu_2\dbE_t\int_t^Te^{\frac{2\g}{1-\a}\dbE_s[|U_{s+\d(s)}|]}ds
\les C\mu_2\dbE_t\int_t^Te^{\frac{2\g}{1-\a}\esssup\limits_{\o}|U_{s+\d(s)}|}ds\\
\ns\ds\qq\qq\qq\qq\qq\qq\ \les C\mu_2L\dbE_t\int_t^{T+K}e^{\frac{2\g}{1-\a}\esssup\limits_{\o}|U_{s}|}ds\\
\ns\ds\qq\qq\qq\qq\qq\qq\ \les C\mu_2L\e e^{\frac{2\g}{1-\a}\|U_{\cd}\|_{L^\i_{\dbF}(T-\e,T)}}
+C\mu_2LK e^{\frac{2\g}{1-\a}\|\xi_{\cd}\|_{\i}}.
\ea$$
In addition, applying \rf{14.57}, one has
$$\ba{ll}
\ds \frac{1}{4L}\dbE_t\int_t^T|V_{s+\zeta(s)}|^2ds
\les \frac{1}{4}\dbE_t\int_t^{T+K}|V_{s}|^2ds
=\frac{1}{4}\dbE_t\Big(\int_t^{T}|V_{s}|^2ds+\dbE_T\int_T^{T+K}|\eta_{s}|^2ds\Big)\\
\ns\ds\qq\qq\qq\qq \q \ \les \frac{1}{4}\dbE_t\Big(\int_t^{T}|V_{s}|^2ds+\esssup_{\t\in\sT[T,T+K],\ \o\in\Omega}\dbE_\t\int_\t^{T+K}|\eta_{s}|^2ds\Big)\\
\ns\ds\qq\qq\qq\qq \q \ \les \frac{1}{4}\|V_\cd\|^2_{\cZ^2[T-\e,T]}+\frac{1}{4}\|\eta_\cd\|^2_{\cZ^2}.
\ea$$
Therefore, we have
\bel{4.2.2.8}\ba{ll}
\ds \p(Y_t)+\frac{1}{2}\dbE_t\int_t^T|Z_s|^2ds\\
\ns\ds\les \p(\|\xi_\cd\|_{\i})
+\Big[\g^{\frac{2}{\a-1}}(\b+C\mu_1)+C\mu_2\Big]\e e^{\frac{2\g}{1-\a}\|Y_{\cd}\|_{L^\i_{\dbF}(T-\e,T)}}\\
\ns\ds\ \ \ +C\mu_2L\e e^{\frac{2\g}{1-\a}\|U_{\cd}\|_{L^\i_{\dbF}(T-\e,T)}}
+C\mu_2LK e^{\frac{2\g}{1-\a}\|\xi_{\cd}\|_{\i}}
+\frac{1}{4}\|V_\cd\|^2_{\cZ^2[T-\e,T]}+\frac{1}{4}\|\eta_\cd\|^2_{\cZ^2}.
\ea\ee

\ms

$\emph{Step 2.2 Estimate of $e^{\g \|Y_\cd\|_{L^\i_{\dbF}(T-\e,T)}}$.}$

\ms

\ms

In light of the last inequality of Proposition \ref{2.1.13} we have
$$\ba{ll}
e^{\frac{3\g}{1-\a}|Y_t|}\les\dbE_t\bigg[e^{\frac{3\g}{1-\a}
e^{CT}\Big(\|\xi_\cd\|_{\i}+C\int_t^T(1+\dbE_s[|U_{s+\d(s)}|]+\dbE_s[|V_{s+\zeta(s)}|^{1+\a}])ds\Big)}\bigg].
\ea$$
Set
$$u(s)=\esssup_{\o}|U_s(\o)|.$$
Then
$$\ba{ll}
\ds e^{\frac{3\g}{1-\a}|Y_t|}\les\dbE_t\bigg[e^{\frac{3\g}{1-\a}
e^{CT}\Big(\|\xi_\cd\|_{\i}+C\int_t^T(1+u(s+\d(s))+\dbE_s[|V_{s+\zeta(s)}|^{1+\a}])ds\Big)}\bigg].\\
\ns\ds\qq\q \ \ \les\dbE_t\bigg[e^{\frac{3\g}{1-\a}
e^{CT}\Big(\|\xi_\cd\|_{\i}+C\int_t^{T+K}(1+Lu(s))ds+C\int_t^{T}\dbE_s[|V_{s+\zeta(s)}|^{1+\a}]ds\Big)}\bigg].
\ea$$
For $t\in[T-\e,T+K]$, we have
$$CL\int_t^{T+K}|u(s)|ds\les CL\e\|U_\cd\|_{L^\i_{\dbF}(T-\e,T)}+CLK\|\xi_\cd\|_{\i}.$$
Using Young's inequality, we obtain
$$
\frac{3\g }{1-\a}e^{CT}C|V_{s+\zeta(s)}|^{1+\a}
\les\frac{1}{L}\frac{1-\a}{2}\bigg(\frac{3\g CL e^{CT}}{1-\a}\Big(\frac{1+\a}{2\d}\Big)^{\frac{1+\a}{2}}\bigg)^{\frac{2}{1-\a}}
+\frac{1}{L}\d|V_{s+\zeta(s)}|^2.
$$
According to the definition of $C_{\d}$ defined in \rf{4.2.1.4}, note that \rf{4.2.1.5}, we further have
$$\ba{ll}
\ds e^{\frac{3\g}{1-\a}|Y_t|}\les C_{\d}e^{\big(\frac{3\g}{1-\a}e^{CT}(1+CLK)\|\xi_\cd\|_{\i}
+\frac{3\g}{1-\a}e^{CT}CL\e\|U_\cd\|_{L^\i_{\dbF}(T-\e,T)}\big)}\cdot
\dbE_t e^{\frac{\d}{L} \int_t^{T}\dbE_s[|V_{s+\zeta(s)}|^2]ds}.
\ea$$
Set
\bel{9.15.16.4)}M_t=\sqrt{\frac{\d}{L}}\int_0^t\dbE_s[|V_{s+\zeta(s)}|^2]^{\frac{1}{2}}dW_s.\ee
We have
$$\ba{ll}
\ds \|M\|^2_{BMO(\dbP)}=\frac{\d}{L}\sup_{\t} \dbE_\t\int_\t^T\dbE_s[|V_{s+\zeta(s)}|^2]ds\\
\ns\ds\qq\qq\q\ =\frac{\d}{L}\sup_{\t} \dbE_\t\int_\t^T|V_{s+\zeta(s)}|^2ds\\
\ns\ds\qq\qq\q\ \les\d\sup_{\t} \dbE_\t\int_\t^{T+K}|V_{s}|^2ds\\
\ns\ds\qq\qq\q\ \les\d\sup_{\t}\dbE_\t\int_\t^{T}|V_{s}|^2ds+\d\|\eta_\cd\|^2_{\cZ^2}.
\ea$$
Then we obtain that
$$\ba{ll}
\ds e^{\frac{3\g}{1-\a}|Y_t|}\les C_{\d}e^{\big(\frac{3\g}{1-\a}e^{CT}(1+CLK)\|\xi_\cd\|_{\i}
+\frac{3\g}{1-\a}e^{CT}CL\e\|U_\cd\|_{L^\i_{\dbF}(T-\e,T)}+\d\|\eta_\cd\|_{\cZ^2}^2\big)}
\cdot\dbE_t e^{\sup\limits_{\t}\dbE_\t\int_\t^{T}\d|V_{s}|^2ds}.\\
\ea$$
It follows from \rf{4.2.11} and the definition of $\mathcal{B}_\e$ that we have
$$
\sup\limits_{\t}\dbE_\t\int_\t^{T}\d|V_{s}|^2ds\les\|\sqrt{\d} V\cd W\|^2_{BMO(\dbP)}=\|\sqrt{\d} V\|^2_{\cZ^2[T-\e,T]}\les\d A<1.
$$
Then applying John-Nirenberg's inequality yields that
$$\ba{ll}
\ds e^{\frac{3\g}{1-\a}|Y_t|}\les C_{\d}\frac{e^{\big(\frac{3\g}{1-\a}e^{CT}(1+CLK)\|\xi_\cd\|_{\i}
+\d\|\eta_\cd\|_{\cZ^2}^2
+\frac{3\g}{1-\a}e^{CT}CL\e\|U_\cd\|_{L^\i_{\dbF}(T-\e,T)}\big)}}{1-\d\|V\cd W\|^2_{BMO(\dbP)}}\\
\ns\ds\qq\q \ \ \les C_{\d}\frac{e^{\big(\frac{3\g}{1-\a}e^{CT}(1+CLK)\|\xi_\cd\|_{\i}
+\d\|\eta_\cd\|^2_{\cZ^2}\big)}}{1-\d A}
e^{\big(\frac{3\g}{1-\a}e^{CT}CL\e\|U_\cd\|_{L^\i_{\dbF}(T-\e,T)}\big)}.
\ea$$
Since $3 e^{CT}CL\e\les 1$ (see the choice of $\e$ in \rf{4.2.11}) and $(U_\cd,V_\cd)\in\mathcal{B}_\e$, we have
\bel{4.9.1}\ba{ll}
\ds e^{\frac{3\g}{1-\a}\|Y_\cd\|_{L^\i_{\dbF}(T-\e,T)}}\les C_{\d}\frac{e^{\big(\frac{3\g}{1-\a}e^{CT}(1+CLK)\|\xi_\cd\|_{\i}
+\d\|\eta_\cd\|^2_{\cZ^2}\big)}}{1-\d A}
e^{\big(\frac{\g}{1-\a}\|U_\cd\|_{L^\i_{\dbF}(T-\e,T)}\big)}\\
\ns\ds\qq\qq \ \les C_{\d}\frac{e^{\big(\frac{3\g}{1-\a}e^{CT}(1+CLK)\|\xi_\cd\|_{\i}
+\d\|\eta_\cd\|^2_{\cZ^2}\big)}}{1-\d A}
\bigg(C_{\d}\frac{e^{\big(\frac{3\g}{1-\a}e^{CT}(1+CLK)\|\xi_\cd\|_{\i}
+\d\|\eta_\cd\|^2_{\cZ^2}\big)}}{1-\d A}\bigg)^{\frac{1}{2}}\\
\ns\ds\qq\qq \ \les
\bigg(C_{\d}\frac{e^{\big(\frac{3\g}{1-\a}e^{CT}(1+CLK)\|\xi_\cd\|_{\i}
+\d\|\eta_\cd\|^2_{\cZ^2}\big)}}{1-\d A}\bigg)^{\frac{3}{2}},
\ea\ee
which implies that the first half of \rf{4.2.2.1} is obtained.

\ms

\ms

$\emph{Step 2.3 Estimate of $\|Z_\cd\|^2_{\cZ^2[T-\e,T]}$.}$

\ms

\ms

From inequality \rf{4.2.2.8}, the definition of $\mu$ and $\tilde{\mu}$ in \rf{4.2.1.4} and note that $(U_\cd,V_\cd)\in\mathcal{B}_\e$, we have
$$\ba{ll}
\ds \frac{1}{2}\dbE_t\int_t^T|Z_s|^2ds\les \g^{-2}e^{{\g\|\xi_\cd\|_{\i}}}
+\mu C_{\d}\frac{e^{\big(\frac{3\g}{1-\a}e^{CT}(1+CLK)\|\xi_\cd\|_{\i}
+\d\|\eta_\cd\|^2_{\cZ^2}\big)}}{1-\d A}\e\\
\ns\ds\qq\qq\qq\qq +C\mu_2LK e^{\frac{2\g}{1-\a}\|\xi_{\cd}\|_{\i}}+\frac{1}{4}\|\eta_\cd\|^2_{\cZ^2}
+\frac{1}{4}A\\
\ns\ds\qq\qq \les\Big(\tilde{\mu}e^{\frac{2\g}{1-\a}\|\xi_{\cd}\|_{\i}}+\frac{1}{4}\|\eta_\cd\|^2_{\cZ^2}\Big)
+\mu C_{\d}\frac{e^{\big(\frac{3\g}{1-\a}e^{CT}(1+CLK)\|\xi_\cd\|_{\i}
+\d\|\eta_\cd\|^2_{\cZ^2}\big)}}{1-\d A}\e
+\frac{1}{4}A.
\ea$$
In light of \rf{4.2.1.6}, we have
$$\frac{1}{2}\|Z_\cd\|^2_{\cZ^2[T-\e,T]}\les \Big(\tilde{\mu}e^{\frac{2\g}{1-\a}\|\xi_{\cd}\|_{\i}}+\frac{1}{4}\|\eta_\cd\|^2_{\cZ^2}\Big)
+\mu C_{\d}\frac{e^{\big(\frac{3\g}{1-\a}e^{CT}(1+CLK)\|\xi_\cd\|_{\i}
+\d\|\eta_\cd\|^2_{\cZ^2}\big)}}{1-\d A}\e
+\frac{1}{4}A=\frac{1}{2}A.$$
Then
$$\|Z_\cd\|^2_{\cZ^2[T-\e,T]}\les A,$$
so we have that the other half of the desired result \rf{4.2.2.1} is obtained.

\subsubsection*{Step 3: Contraction of the quadratic solution mapping}

In this subsection, we prove that the quadratic mapping defined above is a contraction mapping.
For $(U_\cd,V_\cd)\in\mathcal{B}_\varepsilon$ and $(\widetilde{U}_\cd,\widetilde{V}_\cd)\in\mathcal{B}_\varepsilon$, set
$$(Y_\cd,Z_\cd)\triangleq\Gamma(U_\cd,V_\cd),\qq
(\widetilde{Y}_\cd,\widetilde{Z}_\cd)\triangleq\Gamma(\widetilde{U}_\cd,\widetilde{V}_\cd).$$
Then
\bel{4.2.3.1}\left\{\ba{ll}
\ds -dY_t=f(t,Y_t,Z_t,U_{t+\delta(t)},V_{t+\zeta(t)}) dt - Z_{t} dW_{t}, \qq t\in [T-\e,T]; \\
\ns\ns\ds Y_t=\xi_t, \qq  Z_{t}=\eta_t, \qq  t\in[T,T+K],
\ea\right.\ee
and
\bel{4.2.3.2}\left\{\ba{ll}
\ds -d\widetilde{Y}_t=f(t,\widetilde{Y}_t,\widetilde{Z}_t,\widetilde{U}_{t+\delta(t)},\widetilde{V}_{t+\zeta(t)}) dt
 - \widetilde{Z}_{t} dW_{t}, \qq t\in [T-\e,T]; \\
\ns\ns\ds \widetilde{Y}_t=\xi_t, \qq  \widetilde{Z}_{t}=\eta_t, \qq  t\in[T,T+K].
\ea\right.\ee
Note that in $[T,T+K]$,
\bel{9.16.16.11}Y_t=\widetilde{Y}_t=U_t=\widetilde{U}_t=\xi_t,\qq Z_t=\widetilde{Z}_t=V_t=\widetilde{V}_t=\eta_t,\qq
t\in[T,T+K].\ee
In $[T-\e,T]$, we rewrite \rf{4.2.3.1} and \rf{4.2.3.2} as the following integral form:
$$\left\{\ba{ll}
\ds Y_t=\xi_T+\int_t^Tf(s,Y_s,Z_s,U_{s+\delta(s)},V_{s+\zeta(s)}) ds -\int_t^T Z_{s} dW_{s}, \\
\ns\ds \widetilde{Y}_t=\xi_T+\int_t^Tf(s,\widetilde{Y}_s,\widetilde{Z}_s,\widetilde{U}_{s+\delta(s)},
\widetilde{V}_{s+\zeta(s)}) ds -\int_t^T \widetilde{Z}_{s} dW_{s}.
\ea\right.$$
We can define the process $\varphi_\cd$ in an obvious way such that
\bel{4.2.3.4}\left\{\ba{ll}
\ds |\f_s|\les C(1+|Z_s|+|\widetilde{Z}_s|), \\
\ns\ns\ds f(s,Y_s,Z_s,U_{s+\delta(s)},V_{s+\zeta(s)})
-f(s,Y_s,\widetilde{Z}_s,U_{s+\delta(s)},V_{s+\zeta(s)})=(Z_s-\widetilde{Z}_s)\f_s.
\ea\right.\ee
Then
$$\widetilde{W}_t\triangleq W_t-\int_0^t\f_sds$$
is a  Brownian motion corresponding to an equivalent probability measure $\widetilde{\dbP}$ defined by
$$d\widetilde{\dbP}\triangleq\mathcal{E}\Big(\f\cdot W\Big)\Big|_0^Td\dbP,$$
and from a priori estimate established in the above analysis, there exists $\widetilde{K}>0$ such that
$$\|\f\cdot W\|^2_{BMO(\dbP)}=\|\f_\cd\|^2_{\cZ^2[T-\e,T]}\les \widetilde{K}.$$
In fact, one can choose that $\widetilde{K}=3C^2T+6C^2A$.
In light of the following equation
$$\ba{ll}
\ds Y_t-\widetilde{Y}_t+\int_t^T(Z_{s}-\widetilde{Z}_{s})d\widetilde{W}_{s}\\
\ns\ds=\int_t^T\Big( f(s,Y_s,\widetilde{Z}_s,U_{s+\delta(s)},V_{s+\zeta(s)})
-f(s,\widetilde{Y}_s,\widetilde{Z}_s,\widetilde{U}_{s+\delta(s)},\widetilde{V}_{s+\zeta(s)}) \Big)ds,
\ea$$
squaring on both sides of the last equation and then taking the conditional expectation with respect to $\widetilde{\dbP}$ (denote by $\widetilde{\dbE}$), we get
\bel{4.2.3.6.2}\2n\ba{ll}
\ds |Y_t-\widetilde{Y}_t|^2+\widetilde{\dbE}_t\int_t^T|Z_{s}-\widetilde{Z}_{s}|^2ds\\
\ns\ds=\widetilde{\dbE}_t\bigg[\bigg(\int_t^T\Big( f(s,Y_s,\widetilde{Z}_s,U_{s+\delta(s)},V_{s+\zeta(s)})
-f(s,\widetilde{Y}_s,\widetilde{Z}_s,\widetilde{U}_{s+\delta(s)},\widetilde{V}_{s+\zeta(s)}) \Big)ds\bigg)^2\bigg]\\
\ns\ds\les C^2\widetilde{\dbE}_t\bigg[\bigg(\int_t^T\Big( |Y_s-\widetilde{Y}_s|
+ \dbE_s[|U_{s+\delta(s)}-\widetilde{U}_{s+\delta(s)}|]\\
\ns\ds\qq\qq\qq\q
+ \big(1+\dbE_s[|V_{s+\zeta(s)}|^{\a}+|\widetilde{V}_{s+\zeta(s)}|^{\a}]\big)
\dbE_s[|V_{s+\zeta(s)}-\widetilde{V}_{s+\zeta(s)}|] \Big)ds\bigg)^2\bigg]\\
\ns\ds\les 3C^2\widetilde{\dbE}_t\bigg[(T-t)\int_t^T|Y_s-\widetilde{Y}_s|^2ds
+(T-t)\int_t^T\dbE_s[|U_{s+\delta(s)}-\widetilde{U}_{s+\delta(s)}|^2]ds\\
\ns\ds\qq\qq \
+ \int_t^T\Big(1+\dbE_s[|V_{s+\zeta(s)}|^{\a}+|\widetilde{V}_{s+\zeta(s)}|^{\a}]\Big)^2ds
\cdot\int_t^T \dbE_s[|V_{s+\zeta(s)}-\widetilde{V}_{s+\zeta(s)}|]^2ds \bigg].
\ea\ee
On the one hand, set
\begin{equation*}
  \hat{u}(s)=\esssup_\o|U_s-\widetilde{U}_s|,
\end{equation*}
then, note \rf{9.16.16.11}, we have
\bel{9.16.20.1}\ba{ll}
\ds (T-t)\int_t^T\dbE_s[|U_{s+\delta(s)}-\widetilde{U}_{s+\delta(s)}|^2]ds
\les\e\int_t^T\hat{u}(s+\d(s))^2ds\\
\ns\ds\q\les L\e\int_t^{T+K}\hat{u}(s)^2ds
=L\e\int_t^{T}\hat{u}(s)^2ds
\les L\e^2\|U_\cd-\widetilde{U}_\cd\|_{L^{\i}_{\dbF}(T-\e,T)}^2.
\ea\ee
On the other hand, for the last term of \rf{4.2.3.6.2}, by H\"{o}lder's inequality, we have
\bel{9.16.20.2}\ba{ll}
\ds\widetilde{\dbE}_t\bigg[\int_t^T\Big(1+\dbE_s[|V_{s+\zeta(s)}|^{\a}+|\widetilde{V}_{s+\zeta(s)}|^{\a}]\Big)^2ds
\cdot\int_t^T \dbE_s[|V_{s+\zeta(s)}-\widetilde{V}_{s+\zeta(s)}|]^2ds\bigg]\\
\ns\ds\les3\widetilde{\dbE}_t \bigg[\bigg(\int_t^T\Big(1+\dbE_s[|V_{s+\zeta(s)}|^{2\a}
+|\widetilde{V}_{s+\zeta(s)}|^{2\a}]\Big)ds\bigg)^2\bigg]^{\frac{1}{2}}
\cdot\widetilde{\dbE}_t \bigg[\bigg(\int_t^T \dbE_s[|V_{s+\zeta(s)}-\widetilde{V}_{s+\zeta(s)}|]^2ds\bigg)^2\bigg]^{\frac{1}{2}}.
\ea\ee
Similar to \rf{9.15.16.4)}, set
$$M_t=\int_0^t\dbE_s[|V_{s+\zeta(s)}-\widetilde{V}_{s+\zeta(s)}|]dW_s.$$
Then, again note that \rf{9.16.16.11},  we have
$$\ba{ll}
\ds \|M\|^2_{BMO(\dbP)}=\sup_{\t} \dbE_\t\int_\t^T\dbE_s[|V_{s+\zeta(s)}-\widetilde{V}_{s+\zeta(s)}|]^2ds
 =\sup_{\t} \dbE_\t\int_\t^T|V_{s+\zeta(s)}-\widetilde{V}_{s+\zeta(s)}|^2ds\\
\ns\ds\q\les L\sup_{\t} \dbE_\t\int_\t^{T+K}|V_{s}-\widetilde{V}_{s}|^2ds
=L\sup_{\t} \dbE_\t\int_\t^{T}|V_{s}-\widetilde{V}_{s}|^2ds
 \les L\|V_\cd-\widetilde{V}_\cd\|^2_{\cZ^2[T-\e,T]}.
\ea$$
Hence from Proposition \ref{2.1.11.0} and Proposition \ref{2.1.11}, there exist $L_4>0$ and $c_2>0$ such that
\bel{9.16.21.0}\ba{ll}
\ds \widetilde{\dbE}_t \bigg[\bigg(\int_t^T \dbE_s[|V_{s+\zeta(s)}-\widetilde{V}_{s+\zeta(s)}|]^2ds\bigg)^2\bigg]^{\frac{1}{2}}
\les\|M\|^2_{BMO_4(\widetilde{\dbP})}\\
\ns\ds\q\les L^2_4\|M\|^2_{BMO(\widetilde{\dbP})}
\les L^2_4 c^2_2\|M\|^2_{BMO(\dbP)}
\les LL^2_4 c^2_2\|V_\cd-\widetilde{V}_\cd\|^2_{\cZ^2[T-\e,T]}.
\ea\ee
Using H\"{o}lder's inequality, for $t\in[T-\varepsilon,T]$,
$$\ba{ll}
\ds \widetilde{\dbE}_t\bigg[\bigg(
\int_t^T\big(1+\dbE_s[|V_{s+\zeta(s)}|^{2\a}]+\dbE_s[|\widetilde{V}_{s+\zeta(s)}|^{2\a}]\big)ds
\bigg)^2\bigg]^{\frac{1}{2}}\\
\ns\ds \les  \widetilde{\dbE}_t\bigg[\bigg(
\e+\e^{1-\a}\Big(\int_t^T\dbE_s[|V_{s+\zeta(s)}|^{2}]ds\Big)^{\a}
+\e^{1-\a}\Big(\int_t^T\dbE_s[|\widetilde{V}_{s+\zeta(s)}|^{2}]ds\Big)^{\a}
\bigg)^2\bigg]^{\frac{1}{2}}\\
\ns\ds \les  \e^{1-\a}\widetilde{\dbE}_t\bigg[\bigg(
\e^{\a}+\Big(\int_t^T\dbE_s[|V_{s+\zeta(s)}|^{2}]ds\Big)^{\a}
+\Big(\int_t^T\dbE_s[|\widetilde{V}_{s+\zeta(s)}|^{2}]ds\Big)^{\a}
\bigg)^2\bigg]^{\frac{1}{2}}\\
\ns\ds \les  \e^{1-\a}\widetilde{\dbE}_t\bigg[\bigg(
T^{\a}+2-2\a+\a\int_t^T\dbE_s[|V_{s+\zeta(s)}|^{2}]ds
+\a\int_t^T\dbE_s[|\widetilde{V}_{s+\zeta(s)}|^{2}]ds
\bigg)^2\bigg]^{\frac{1}{2}}\\
\ns\ds \les  \e^{1-\a}\bigg(
T^{\a}+2-2\a
+\a\widetilde{\dbE}_t\[\(\int_t^T\dbE_s[|V_{s+\zeta(s)}|^{2}]ds\)^2\]^{\frac{1}{2}}
+\a\widetilde{\dbE}_t\[\(\int_t^T\dbE_s[|\widetilde{V}_{s+\zeta(s)}|^{2}]ds
\)^2\]^{\frac{1}{2}}\bigg).
\ea$$
Similar to the above discussion, and note that \rf{9.16.16.11}, we have
$$\ba{ll}
\ds \widetilde{\dbE}_t \bigg[\bigg(\int_t^T \dbE_s[|V_{s+\zeta(s)}|]^2ds\bigg)^2\bigg]^{\frac{1}{2}}
\les LL^2_4 c^2_2\big(\|V_\cd\|^2_{\cZ^2[T-\e,T]}+\|\eta\|_{\cZ^2}^2\)
\les LL^2_4 c^2_2\big(A+\|\eta\|_{\cZ^2}^2\),\\
\ns\ds  \widetilde{\dbE}_t \bigg[\bigg(\int_t^T \dbE_s[|\widetilde{V}_{s+\zeta(s)}|]^2ds\bigg)^2\bigg]^{\frac{1}{2}}
\les LL^2_4 c^2_2\big(\|\widetilde{V}_\cd\|^2_{\cZ^2[T-\e,T]}+\|\eta\|_{\cZ^2}^2\)
\les LL^2_4 c^2_2\big(A+\|\eta\|_{\cZ^2}^2\).
\ea$$
Hence
\bel{9.16.21.1}\ba{ll}
\ds \widetilde{\dbE}_t\bigg[\bigg(
\int_t^T\big(1+\dbE_s[|V_{s+\zeta(s)}|^{2\a}]+\dbE_s[|\widetilde{V}_{s+\zeta(s)}|^{2\a}]\big)ds
\bigg)^2\bigg]^{\frac{1}{2}}\\
\ns\ns\ds \les  \e^{1-\a}\Big(T^{\a}+2+2\a L L^4_4c^2_2\big(A+\|\eta_\cdot\|^2_{\cZ^2}\big)\Big).
\ea\ee
Combining \rf{9.16.20.2}, \rf{9.16.21.0} and \rf{9.16.21.1}, one has
\bel{9.16.20.3}\ba{ll}
\ds\widetilde{\dbE}_t\bigg[\int_t^T\Big(1+\dbE_s[|V_{s+\zeta(s)}|^{\a}+|\widetilde{V}_{s+\zeta(s)}|^{\a}]\Big)^2ds
\cdot\int_t^T \dbE_s[|V_{s+\zeta(s)}-\widetilde{V}_{s+\zeta(s)}|]^2ds\bigg]\\
\ns\ds\les 3\e^{1-\a}\Big(T^{\a}+2+2\a L L^4_4c^2_2\big(A+\|\eta_\cdot\|^2_{\cZ^2}\big)\Big)\cdot
LL^2_4 c^2_2\|V_\cd-\widetilde{V}_\cd\|^2_{\cZ^2[T-\e,T]}.
\ea\ee
Again, combining \rf{4.2.3.6.2}, \rf{9.16.20.1} and \rf{9.16.20.3}, we obtain that
\bel{4.2.3.6}\2n\ba{ll}
\ds |Y_t-\widetilde{Y}_t|^2+\widetilde{\dbE}_t\int_t^T|Z_{s}-\widetilde{Z}_{s}|^2ds\\
\ns\ds\les 3C^2\e^2\|Y_\cd-\widetilde{Y}_\cd\|_{L^{\i}_{\dbF}(T-\e,T)}^2
+3C^2L\e^2\|U_\cd-\widetilde{U}_\cd\|_{L^{\i}_{\dbF}(T-\e,T)}^2\\
\ns\ds\ \ \
+ 9C^2LL^2_4 c^2_2\Big(T^{\a}+2+2\a L L^4_4c^2_2\big(A+\|\eta_\cdot\|^2_{\cZ^2}\big)\Big)\e^{1-\a}
\cdot \|V_\cd-\widetilde{V}_\cd\|^2_{\cZ^2[T-\e,T]}.
\ea\ee
In view of estimates \rf{2.1.12}, we have for $t\in[T-\e,T]$,
$$\ba{ll}
\ds (1-3C^2\e^2)\|Y_\cd-\widetilde{Y}_\cd\|^2_{L^{\i}_{\dbF}(T-\e,T)}+c_1^2\|Z_\cd-\widetilde{Z}_\cd\|^2_{\cZ^2[T-\e,T]}\\
\ns\ds\les
3C^2\e^2\|Y_\cd-\widetilde{Y}_\cd\|_{L^{\i}_{\dbF}(T-\e,T)}^2
+ \widetilde{C}\e^{1-\a}
\cdot \|V_\cd-\widetilde{V}_\cd\|^2_{\cZ^2[T-\e,T]},
\ea$$
where
$$\widetilde{C}=9C^2LL^2_4 c^2_2\Big(T^{\a}+2+2\a L L^4_4c^2_2\big(A+\|\eta_\cdot\|^2_{\cZ^2}\big)\Big).$$
It is then standard to show that there exists a small positive constant $\e$ such that the mapping $\G$ is a contraction on the previously given set $\mathcal{B}_{\e}$. The desired result is obtained.
%
\endpf

\subsection{Global Solution}

Based on the previous result concerning the local solution of BSDE \rf{1.1}, we investigate the global adapted solution of a class of BSDE \rf{1.1}. Before presenting the main result, we present the assumption:
\bas{4.1.4} \rm
Assume that  $f(s,\o,z):[0,T]\ts\Om\ts\dbR\ra\dbR$ is an $\mathcal{F}_s$-adapted scalar-valued generator, and
for all $s\in[0,T]$, $h(s,\o,y,\xi,\eta):\Om\ts\dbR\ts L^2_{\cF_r}(\Om;\dbR)\ts
L^2_{\cF_{\bar{r}}}(\Om;\dbR)\ra L^2_{\cF_s}(\Om;\dbR)$, where $r,\bar{r}\in[s,T+K]$.
Let $C$ be a positive constant. For all $s\in[0,T]$, $y,\bar{y},z,\bar{z}\in\dbR$,
$\xi_{\cd},\bar{\xi}_{\cd},\eta_{\cd},\bar{\eta}_{\cd}\in L^2_{\dbF}(s,T+K;\dbR)$, we have
$$\ba{ll}
\ds |f(s,z)|\les C(1+|z|^2),\qq |h(s,y,\xi_r,\eta_{\bar{r}})|\les C\big(1+|y|+\dbE_s[\xi_r]\big);\\
\ns\ds |f(s,z)-f(s,\bar{z})|\les C(1+|z|+|\bar{z}|)|z-\bar{z}|;\\
\ns\ds |h(s,y,\xi_r,\eta_{\bar{r}})-h(s,\bar{y},\bar{\xi}_r,\bar{\eta}_{\bar{r}})|
\les C\big(|y-\bar{y}|+\dbE_s\big[|\xi_r-\bar{\xi}_r|+|\eta_{\bar{r}}-\bar{\eta}_{\bar{r}}|\big]\big).
\ea$$
\eas

\begin{example} \rm
Assumption \ref{4.1.4} requires that $h(\cd)$ should be bounded with respect to $\eta_\cd$. For example, the generators
$$\ba{ll}
\ds f(s,z)=1+|z|^2,\qq h(s,y,\xi_r,\eta_{\bar r})=1+|y|+\dbE_s\big[|\xi_r|+|\sin(\eta_{\bar r})|\big],\\
\ns\ds\qq\qq  \forall s\in[0,T], \q y,z\in\dbR, \q \xi_{\cd},\eta_{\cd}\in L^2_{\dbF}(s,T+K;\dbR), \q r,\bar{r}\in[s,T+K]
\ea$$
satisfy such an assumption.
\end{example}

In this subsection, the main result is the following theorem, which concerns the solvability of global adapted solution of BSDE \rf{1.1}.

\bt{4.1.5}\sl Let Assumptions \ref{4.1.2} and \ref{4.1.4} hold. Then, the following BSDE
\bel{4.1.6}\left\{\ba{ll}
\ds -dY_t=[f(t,Z_t)+h(t,Y_t,Y_{t+\delta(t)},Z_{t+\zeta(t)})]dt-Z_{t}dW_{t}, \qq t\in [0,T]; \\
\ns\ds Y_t=\xi_t, \qq  Z_{t}=\eta_t, \qq  t\in[T,T+K],
\ea\right.\ee
has a unique adapted solution $(Y_\cd,Z_\cd)$ on $[0,T+K]$ such that $Y_\cd$ is bounded and $Z\cd W$ is a $BMO(\dbP)$-martingale.
\et

\it Proof. \rm
Since when $t\in[T,T+K]$, the values of $(Y_\cd,Z_\cd)$ of BSDE \rf{4.1.6} are determined, we essentially need to prove that \rf{4.1.6} has a unique adapted solution on $[0,T]$. In order to do this, we first consider the equation \rf{4.1.6} on $[T-\theta_{\l},T+K]$  for a positive constant $\theta_{\l}$ that will be determined later.

\ms

For simplicity of presentation, we rewrite BSDE \rf{4.1.6} as the following integral form,
\bel{4.3.0}\left\{\ba{ll}
\ds Y_t=\xi_T+\int_t^T\Big(f(s,Z_s)+h(s,Y_s,Y_{s+\d(s)},Z_{s+\zeta(s)})\Big)ds-\int_t^TZ_{s}dW_{s},\q~t\in[0,T];\\
\ns\ds Y_t=\xi_t, \q~  Z_{t}=\eta_t, \q~  t\in[T,T+K].
\ea\right.\ee
Note that there exists a constant $\widetilde{C}>0$ such that $\|\xi_\cd\|^2_{\i}\les\widetilde{C}$. It follows from Assumption \ref{4.1.4} that we have the following inequality
\bel{19.3.23.1}\ba{ll}
\ds |2xh(s,y,\xi_r,\eta_{\bar r})|
\les\widetilde{C}+\widetilde{C}|x|^2+\widetilde{C}|y|^2+\widetilde{C}\dbE_s[|\xi_r|^2], \\
\ns\ns\ds\qq\forall s\in[0,T], x,y\in\dbR,\xi_{\cd},\eta_{\cd}\in L^2_{\dbF}(s,T+K;\dbR),r,\bar{r}\in[s,T+K].
\ea\ee
Let $\a(\cd)$ be the unique solution of the following equation,
$$\a(t)=\widetilde{C}+\int_t^{T+K}\widetilde{C}ds+\int_t^{T+K}\Big((1+L)\widetilde{C}+\widetilde{C}\Big)\a(s)ds,\q~ t\in[0,T+K].$$
It is easy to observe that $\a(\cdot)$ is a continuously decreasing function in $t$ and
$$\a(t)=\widetilde{C}+\int_t^{T+K}\widetilde{C}\Big(1+(1+L)\a(s)\Big)ds+\widetilde{C}\int_t^{T+K}\a(s)ds,\q~ t\in[0,T+K].$$
Define
$$\l\triangleq\sup_{t\in[0,T+K]}\a(t)=\a(0).$$
As $\|\xi_\cd\|^2_{\i}\les\widetilde{C}\les\l$, Theorem \ref{4.1.3} shows that there exists $\theta_{\l}>0$, which only depends on $\l$, such that BSDE \rf{4.3.0} has a local solution on $[T-\theta_{\l},T+K]$ and it can be constructed through the Picard iteration. Without loss of generality, we suppose that $f(s,0)=0,\ s\in[0,T]$.

\ms

Let us consider the Picard iteration:
$$\left\{\ba{ll}
\ds Y^0_t=\xi_T+\int_t^TZ^0_sdW_s,\q~t\in[T-\theta_{\l},T];\\
\ns\ds Y^0_t=\xi_t, \q~  Z^0_{t}=\eta_t, \q~  t\in[T,T+K],
\ea\right.$$
and for $i\ges0$,
$$\left\{\ba{ll}
\ds Y^{i+1}_t=\xi_T+\int_t^T\big[f(s,Z^{i+1}_s)+h(s,Y^i_s,Y^i_{s+\delta(s)},Z^i_{s+\zeta(s)})\big]ds
-\int_t^TZ^{i+1}_sdW_s\\
\ns\ds\q\q\ =\xi_T+\int_t^Th(s,Y^i_s,Y^i_{s+\delta(s)},Z^i_{s+\zeta(s)})ds
-\int_t^TZ^{i+1}_sd\widetilde{W}_s^{i+1},\q~ t\in[T-\theta_{\l},T];\\
\ns\ns\ds Y^{i+1}_t=\xi_t, \q~ Z^{i+1}_{t}=\eta_t, \q~ t\in[T,T+K],
\ea\right.$$
where
$$f(s,Z^{i+1}_s)=Z^{i+1}_s\f^{i+1}_s,$$
and
$$\widetilde{W}_t^{i+1}=W_t-\int_0^t\f^{i+1}_s\mathbf{1}_{[T-\theta_{\l},T]}(s)ds$$
is a Brownian motion corresponding to an equivalent probability measure $\dbP^{i+1}$ which is denoted by $\widetilde{\dbP}$
hereafter for simplicity, and whose expectation is denoted by $\widetilde{\dbE}$.

\ms

By induction, we show that the following inequality holds for $i\ges 0$,
\bel{4.3.2}|Y^i_t|^2\les \a(t),\q~ t\in[T-\th_{\l},T+K].\ee
Actually, it is easy to see that $|Y^0_t|^2\les \a(t)$, and suppose that $|Y^i_t|^2\les \a(t)$ for
$t\in[T-\th_{\l},T+K]$. Then we just need to prove that $|Y^{i+1}_t|^2\les \a(t)$ for
$t\in[T-\th_{\l},T+K]$. Using It\^{o}'s formula and the inequality \rf{19.3.23.1}, it is straightforward to deduce the following estimate for $r\in[T-\theta_{\l},t]$,
\bel{4.3.1}\ba{ll}
\ds \widetilde{\dbE}_r[|Y^{i+1}_t|^2]+\widetilde{\dbE}_r\int_t^T|Z^{i+1}_s|^2ds\\
\ns\ds = \widetilde{\dbE}_r[|\xi_T|^2]
+\widetilde{C}\widetilde{\dbE}_r\int_t^T2Y^{i+1}_sh(s,Y^i_s,Y^i_{s+\delta(s)},Z^i_{s+\zeta(s)})ds\\
\ns\ds\les \widetilde{C}+\int_t^T\widetilde{C}ds
+\widetilde{C}\widetilde{\dbE}_r\int_t^T|Y^{i+1}_s|^2ds
+\widetilde{C}\widetilde{\dbE}_r\int_t^T|Y^{i}_s|^2ds
+\widetilde{C}\widetilde{\dbE}_r\int_t^T\dbE_s[|Y^{i}_{s+\delta(s)}|^2]ds\\
\ns\ds\les \widetilde{C}+\int_t^T\widetilde{C}ds
+\widetilde{C}\widetilde{\dbE}_r\int_t^T|Y^{i+1}_s|^2ds
+\widetilde{C}\int_t^T\a(s)ds
+\widetilde{C}\int_t^T\a(s+\d(s))ds\\
\ns\ds\les \widetilde{C}+\widetilde{C}\int_t^T\widetilde{\dbE}_r[|Y^{i+1}_s|^2]ds
+\widetilde{C}\int_t^{T}\Big(1+\a(s)\Big)ds
+\widetilde{C}L\int_t^{T+K}\a(s)ds\\
\ns\ds\les \widetilde{C}+\widetilde{C}\int_t^{T+K}\widetilde{\dbE}_r[|Y^{i+1}_s|^2]ds
+\widetilde{C}\int_t^{T+K}\Big(1+(1+L)\a(s)\Big)ds.
\ea\ee
Then we get
$$\widetilde{\dbE}_r[|Y^{i+1}_t|^2]\les \widetilde{C}
+\widetilde{C}\int_t^{T+K}\Big(1+(1+L)\a(s)\Big)ds+\widetilde{C}\int_t^{T+K}\widetilde{\dbE}_r[|Y^{i+1}_s|^2]ds.$$
It follows from the comparison theorem that we have
$$\widetilde{\dbE}_r[|Y^{i+1}_t|^2]\les \a(t),\q~ t\in[T-\th_{\l},T+K].$$
Setting $r=t$, we obtain
$$|Y^{i+1}_t|^2\les \a(t),\q~ t\in[T-\th_{\l},T+K].$$
Therefore, inequality \rf{4.3.2} holds.

\ms

Since $Y_t=\lim\limits_iY^i_t$, the constructed local solution $(Y_\cd,Z_\cd)$ in $[T-\th_{\l},T+K]$ satisfies the following estimate,
$$|Y_t|^2\les \a(t),\q~ t\in[T-\th_{\l},T+K].$$
In particular, $|Y_{T-\th_{\l}}|^2\les\a(T-\th_{\l})\les\l$.

\ms

Taking $T-\th_{\l}$ as the terminal time and $Y_{T-\th_{\l}}$ as the terminal value, it follows from Theorem \ref{4.1.3} that BSDE \rf{4.3.0} has a local solution $(Y,Z)$ on $[T-2\th_{\l},T-\th_{\l}+K]$ through the Picard iteration. Also, using the Picard iteration and the fact that $|Y_{T-\th_{\l}}|^2\les\a(T-\th_{\l})$, we deduce that $|Y_t|^2\les \a(t)$ for $t\in[T-2\th_{\l},T-\th_{\l}+K]$. Repeating the preceding process, we can extend the pair $(Y_\cd,Z_\cd)$ to the whole interval $[0,T+K]$ within a finite of steps such that $Y_\cd$ is uniformly bounded by $\l$. We next show that $Z\cdot W$ is a
$BMO(\dbP)$-martingale.

\ms

Similar to the proof of the inequality \rf{4.2.2.2}, we have (with $\g=2C$)
$$\ba{ll}
\ds \p(Y_t)+\frac{1}{2}\dbE_t\int_t^T|Z_s|^2ds\\
\ns\ds\les \p(\|\xi_\cd\|_{\i})+C\dbE_t\int_t^T|\p'(Y_s)|\Big(2+|Y_s|+\dbE_s[Y_{s+\d(s)}]\Big)ds\\
\ns\ds\les \p(\|\xi_\cd\|_{\i})+C\dbE_t\int_t^T\p'(\l)\Big(2+|Y_s|+\dbE_s[Y_{s+\d(s)}]\Big)ds\\
\ns\ds\les \p(\|\xi_\cd\|_{\i})+C\p'(\l)\dbE_t\int_t^{T+K}\Big(2+(1+L)|Y_s|\Big)ds\\
\ns\ds\les \p(\|\xi_\cd\|_{\i})+C\p'(\l)\int_t^{T+K}\Big(2+(1+L)\l\Big)ds.\\
\ea$$
Hence, we have
$$\|Z\cdot W\|^2_{BMO(\dbP)}\les 2\p(\|\xi_\cd\|_{\i})+2C\p'(\l)\big(2+(1+L)\l\big)(T+K),$$
which implies that  $Z\cdot W$ is a $BMO(\dbP)$-martingale.

\ms

Finally, we shall prove the uniqueness. Similarly, we first consider the uniqueness on the interval $[T-\e,T]$.
Let $(Y_\cd,Z_\cd)$  and $(\widetilde{Y}_\cd,\widetilde{Z}_\cd)$ be two adapted solutions. Then we have (recall that $\f_\cd$ is defined by \rf{4.2.3.4})
$$\ba{ll}
\ds Y_t-\widetilde{Y}_t=\int_t^T\Big([f(s,Z_s)-f(s,\widetilde{Z}_s)]+[h(s,Y_s,Y_{s+\delta(s)},Z_{s+\zeta(s)})
-h(s,\widetilde{Y}_s,\widetilde{Y}_{s+\delta(s)},\widetilde{Z}_{s+\zeta(s)})]\Big) ds\\
\ns\ds\qq\qq \ -\int_t^T (Z_{s}-\widetilde{Z}_{s}) dW_{s}\\
\ns\ds\qq\q \ =\int_t^T\Big(h(s,Y_s,Y_{s+\delta(s)},Z_{s+\zeta(s)})
-h(s,\widetilde{Y}_s,\widetilde{Y}_{s+\delta(s)},\widetilde{Z}_{s+\zeta(s)})\Big) ds
-\int_t^T (Z_{s}-\widetilde{Z}_{s}) d\widetilde{W}_{s}.
\ea$$
Similar to the two inequalities in \rf{4.2.3.6}, for any stopping time $\t$ taking values in $[T-\e,T]$, we have
$$\ba{ll}
\ds |Y_\t-\widetilde{Y}_\t|^2+\widetilde{\dbE}_\t\int_\t^T|Z_{s}-\widetilde{Z}_{s}|^2ds\\
\ns\ds=\widetilde{\dbE}_\t\bigg[\bigg(\int_\t^T\Big( h(s,Y_s,Y_{s+\delta(s)},Z_{s+\zeta(s)})
-h(s,\widetilde{Y}_s,\widetilde{Y}_{s+\delta(s)},\widetilde{Z}_{s+\zeta(s)}) \Big)ds\bigg)^2\bigg]\\
\ns\ds\les C^2\widetilde{\dbE}_\t\bigg[\bigg(\int_\t^T\Big( |Y_s-\widetilde{Y}_s|
+ \dbE_s[|Y_{s+\delta(s)}-\widetilde{Y}_{s+\delta(s)}|]
+ \dbE_s[|Z_{s+\zeta(s)}-\widetilde{Z}_{s+\zeta(s)}|] \Big)ds\bigg)^2\bigg]\\
\ns\ds\les 3C^2\e\widetilde{\dbE}_\t\bigg[\int_\t^T|Y_s-\widetilde{Y}_s|^2ds
+\int_\t^T\dbE_s[|Y_{s+\delta(s)}-\widetilde{Y}_{s+\delta(s)}|^2]ds
+\int_\t^T \dbE_s[|Z_{s+\zeta(s)}-\widetilde{Z}_{s+\zeta(s)}|]^2ds \bigg].
\ea$$
On the one hand,
$$\ba{ll}
\ds \int_\t^T\dbE_s[|Y_{s+\delta(s)}-\widetilde{Y}_{s+\delta(s)}|^2]ds
\les\int_\t^T\esssup_{\o}|Y_{s+\delta(s)}-\widetilde{Y}_{s+\delta(s)}|^2ds \les L\int_\t^{T+K}\esssup_{\o}|Y_{s}-\widetilde{Y}_{s}|^2ds\\
\ns\ds\qq\qq\qq\qq\q = L\int_\t^{T}\esssup_{\o}|Y_{s}-\widetilde{Y}_{s}|^2ds
\les L\e\|Y_\cd-\widetilde{Y}_\cd\|_{L^\i_{\dbF}(T-\e,T)}^2.
\ea$$
On the other hand,
$$\ba{ll}
\ds \|Z-\widetilde{Z}\|^2_{BMO(\dbP)}
=\sup_{\t}\dbE_\t\int_\t^T \dbE_s[|Z_{s+\zeta(s)}-\widetilde{Z}_{s+\zeta(s)}|]^2ds
=\sup_{\t}\dbE_\t\int_\t^T|Z_{s+\zeta(s)}-\widetilde{Z}_{s+\zeta(s)}|^2ds\\
\ns\ds\q \les L\sup_{\t}\dbE_\t\int_\t^{T+K}|Z_{s}-\widetilde{Z}_{s}|^2ds
= L\sup_{\t}\dbE_\t\int_\t^{T}|Z_{s}-\widetilde{Z}_{s}|^2ds
\les L\|Z_\cd-\widetilde{Z}_\cdot\|^2_{\cZ^2[T-\e,T]}.
\ea$$
Using H\"{o}lder's inequality, and similar to \rf{9.16.21.0}, there exist $L_4>0$ and $c_2>0$ such that
$$\ba{ll}
\ds\widetilde{\dbE}_\t\bigg[\int_\t^T \dbE_s[|Z_{s+\zeta(s)}-\widetilde{Z}_{s+\zeta(s)}|]^2ds \bigg]
\les\e\widetilde{\dbE}_\t\bigg[\(\int_\t^T \dbE_s[|Z_{s+\zeta(s)}-\widetilde{Z}_{s+\zeta(s)}|]^2ds\)^2 \bigg]^{\frac{1}{2}}\\
\ns\ds \les\e\|Z-\widetilde{Z}\|^4_{BMO(\widetilde{\dbP})}\les\e L^2_4\|Z-\widetilde{Z}\|^2_{BMO(\widetilde{\dbP})}
\les\e L^2_4c_2^2\|Z-\widetilde{Z}\|^2_{BMO(\dbP)}
\les\e LL^2_4c_2^2\|Z_\cd-\widetilde{Z}_\cdot\|^2_{\cZ^2[T-\e,T]}.
\ea$$
So we obtain that
$$\ba{ll}
\ds |Y_\t-\widetilde{Y}_\t|^2+\widetilde{\dbE}_\t\int_\t^T|Z_{s}-\widetilde{Z}_{s}|^2ds\\
\ns\ds\les3C^2(1+L)\e^2\|Y_\cd-\widetilde{Y}_\cd\|_{L^\i_{\dbF}(T-\e,T)}^2
+3C^2LL^2_4c_2^2\e^2\|Z_\cd-\widetilde{Z}_\cdot\|^2_{\cZ^2[T-\e,T]}.
\ea$$
Therefore, again, note that $Y_t-\widetilde{Y}_t\equiv 0$ and $Z_t-\widetilde{Z}_t\equiv 0$ in $[T,T+K]$,
we have (on the interval $[T-\e,T+K]$),
$$\ba{ll}
\ds \|Y_\cd-\widetilde{Y}_\cd\|_{L^\i_{\dbF}(T-\e,T)}^2+c_1^2\|Z_\cd-\widetilde{Z}_\cd\|^2_{\cZ^2[T-\e,T]}\\
\ns\ns\ds\les 3C^2(1+L)\e^2\|Y_\cd-\widetilde{Y}_\cd\|_{L^\i_{\dbF}(T-\e,T)}^2
+3C^2LL^2_4c_2^2\e^2\|Z_\cd-\widetilde{Z}_\cd\|^2_{\cZ^2[T-\e,T]}.
\ea$$
Since
$$|\f_\cd|\les C(1+|Z_\cd|+|\widetilde{Z}_\cd|),$$
the two generic constants $c_1$ and $c_2$ only depend on the sum
$$\|Z_\cd\|^2_{\cZ^2[0,T+K]}+\|\widetilde{Z}_\cd\|^2_{\cZ^2[0,T+K]}.$$
Then when $\e$ is sufficiently small, we conclude that $Y_\cd=\widetilde{Y}_\cd$ and  $Z_\cd=\widetilde{Z}_\cd$ on $[T-\e,T]$. Repeating iteratively within a finite of steps, we can derive the uniqueness on the interval $[0,T]$.
\endpf
%
%
%
%
%
%
%
%
%
%
%
%
%
%
%

\section{Scalar Case: Unbounded Terminal Value}

In this section, we study a class of one-dimensional anticipated BSDE with quadratic growth and with unbounded terminal value, i.e., the terminal value $\xi_\cd$ belongs to $S_\dbF^2(0,T+K;\dbR)$.
Before going further, we would like to show a lemma first. In the following, for a function $g(\cd)$, we use  $g_\mu(\cd)$ to denote the partial derivative of $g(\cd)$ at $\mu$  with $\mu=t,y$. And $g_{\mu\mu}(\cd)$ is defined similarly.
\bl{8.8.20.34} \sl
Let $\l(t,y):[0,T]\ts\dbR\ra\dbR$ be a continuous function satisfying the following items:
\begin{enumerate}[~~\,\rm(i)]
\item $\sup\limits_{t\in[0,T]}|\l(t,y)|\in L^1(\dbR)$;
\item $\l_{t}(\cd)$ is continuous such that $\sup\limits_{t\in[0,T]}|\l_t(t,y)|\in L^1(\dbR)$.
\end{enumerate}
Then the function
$$\p(t,y)\triangleq\int_0^y\exp\Big(2\int_0^s\l(t,r)dr\Big)ds,\qq(t,y)\in[0,T]\ts\dbR,$$
satisfies the differential equation $\p_{yy}(t,y)-2\l(t,y)\p_{y}(t,y)=0$,
and has the following properties:
\begin{enumerate}[~~\,\rm(i)]
\item [1.] $\p(t,\cd)$ is a one to one function from $\dbR$ onto $\dbR$,
and both $\p(t,\cd)$ and its inverse $\p^{-1}(t,\cd)$ are smooth functions for all $t\in[0,T]$. Moreover, $\p(t,\cd)$
and $\p^{-1}(t,\cd)$ are globally Lipschitz in $y$ for every $t\in[0,T]$,
i.e., there exists constant $L$ such that
$$|\p(t,y)-\p(t,\bar{y})|\les L|y-\bar{y}|,\q~
|\p^{-1}(t,y)-\p^{-1}(t,\bar{y})|\les L|y-\bar{y}|,\q~\forall~t\in[0,T],y,\bar{y}\in\dbR.$$
\item [2.] $\p_{y}(\cd)$ is a positive bounded function,
and $\p_{t}(\cd)$ is continuous in $t$ and Lipschitz continuous in $y$.
\end{enumerate}
\el

\it Sketch of proof. \rm
From the definition, one has
$$\ba{ll}
\ds\p_t(t,y)=\int_0^y\exp\Big(2\int_0^s\l(t,r)dr\Big)\Big(2\int_0^s\l_t(t,r)dr\Big)ds\\
\ns\ds\p_{y}(t,y)=\exp\Big(2\int_0^y\l(t,r)dr\Big),\q~
\p_{yy}(t,y)=2\l(t,y)\exp\Big(2\int_0^y\l(t,r)dr\Big).\ea$$
Since $\l(\cd)$ is continuous, clearly, $\p(\cd)$ and $\p^{-1}(\cd)$ are continuous,
one to one, strictly increasing function, and belongs
to $\cC^2(\dbR)$, moreover, $\p_{yy}(t,y)-2h(t,y)\p_{y}(t,y)=0$.

\ms

Since $\l(t,\cd)$  is integrable  for every $t\in[0,T]$,
it is directly to see that $\p_{y}(\cd)$ is bounded and positive,
and therefore $\p(t,\cd)$ and $\p^{-1}(t,\cd)$ are globally Lipschitz with respect to $y$.
Moreover, one can check that $\p_{t}(\cd)$ is continuous in $t$
and Lipschitz continuous in $y$.
\endpf

\ms

Now, consider the following equation,
\bel{8.8.20.25}\left\{\ba{ll}
\ds -dY_t=\Big(f(t,Y_t,Y_{t+\d(t)},Z_{t+\z(t)})+\l(t,Y_t)Z_t^2\Big)dt-Z_{t}dW_{t},\q~ t\in [0,T]; \\
\ns\ns\ds Y_t=\xi_t, \q~ Z_{t}=\eta_t, \q~ t\in[T,T+K],
\ea\right.\ee
where the deterministic continuous functions $\delta(\cdot)$ and $\zeta(\cdot)$ satisfy  \rf{14.55} and \rf{14.56}.
In order to establish the existence and uniqueness of the above equation, we need the following assumptions.
\bas{8.8.20.49} \rm
Assume that for all $s\in[0,T]$, $f(s,\o,y,\xi,\eta):\Om\ts\dbR\ts\ts L^2_{\cF_r}(\Om;\dbR)\ts
L^2_{\cF_{\bar{r}}}(\Om;\dbR)\ra L^2_{\cF_s}(\Om;\dbR)$, where $r,\bar{r}\in[s,T+K]$.
Let $C$ be a positive constant such that for all $s\in[0,T]$, $y,\bar{y}\in\dbR$,
$\xi_{\cd},\bar{\xi}_{\cd},\eta_{\cd},\bar{\eta}_{\cd}\in L^2_{\dbF}(s,T+K;\dbR)$, $r,\bar{r}\in[s,T+K]$, we have
$$\ba{ll}
\ds\qq\qq\qq \ \ |f(s,y,\xi_{r},\eta_{\bar{r}})|\les C,\\
\ns\ds |f(s,y,\xi_{r},\eta_{\bar{r}})-f(s,\bar{y},\bar{\xi}_{r},\bar{\eta}_{\bar{r}})|
\les C\big(|y-\bar{y}|+\dbE_s\big[|\xi_{r}-\bar\xi_{r}|+|\eta_{\bar{r}}-\bar{\eta}_{\bar{r}}|\big]\big).\\
\ea$$
\eas

\bas{5.2.3} \rm
The terminal value
$\xi_{\cd} \in S^2_\dbF(T,T+K;\dbR)$ and $\eta_{\cd}\in L^2_\dbF(T,T+K;\dbR)$.
\eas

\begin{theorem}\label{8.8.21.2} \sl
Let  Assumptions \ref{8.8.20.49} and \ref{5.2.3} hold, and suppose that $\l(\cd)$ satisfy the condition of Lemma \ref{8.8.20.34}. Then  BSDE \rf{8.8.20.25} admits a unique adapted solution $(Y_\cd,Z_\cd)$ in
$S_\dbF^2(0,T+K;\dbR)\times L^2_\dbF(0,T+K;\dbR)$.
\end{theorem}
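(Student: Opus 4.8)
The plan is to remove the quadratic term by the change of variables supplied by Lemma~\ref{8.8.20.34}, thereby reducing \rf{8.8.20.25} to an anticipated BSDE with Lipschitz data, and then to solve the latter by a freezing/fixed-point argument of the type used in Sections~3 and~4. First I would record the properties of $\varphi$ that are needed. Because $\sup_{t\in[0,T]}|\l(t,\cd)|\in L^1(\dbR)$, the exponent $\int_0^y\l(t,r)\,dr$ is bounded uniformly in $(t,y)$, so $\varphi_y(t,y)=\exp\big(2\int_0^y\l(t,r)\,dr\big)$ is bounded both above and away from zero; moreover $\varphi(t,\cd),\varphi^{-1}(t,\cd)$ are globally bi-Lipschitz with $\varphi(t,0)=0$, and $\varphi_t(\cd)$ is continuous in $t$, Lipschitz in $y$, of at most linear growth in $y$, and $\varphi_t(t,0)=0$. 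Consequently the correspondence $(Y_\cd,Z_\cd)\longleftrightarrow(P_\cd,Q_\cd)\deq\big(\varphi(\cd,Y_\cd),\varphi_y(\cd,Y_\cd)Z_\cd\big)$, with inverse $Y_\cd=\varphi^{-1}(\cd,P_\cd)$, $Z_\cd=Q_\cd/\varphi_y(\cd,\varphi^{-1}(\cd,P_\cd))$, is a bijection of $S^2_\dbF(0,T+K;\dbR)\ts L^2_\dbF(0,T+K;\dbR)$ onto itself, so it suffices to prove existence and uniqueness for the transformed equation.

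Next I would set up the fixed-point map. Let $\cA\deq\big\{(U_\cd,V_\cd)\in S^2_\dbF(0,T+K;\dbR)\ts L^2_\dbF(0,T+K;\dbR)\mid U_t=\xi_t,\ V_t=\eta_t,\ t\in[T,T+K]\big\}$. Given $(U_\cd,V_\cd)\in\cA$, consider the (now non-anticipated) quadratic BSDE on $[0,T]$,
$$-dY_t=\big(f(t,Y_t,U_{t+\d(t)},V_{t+\z(t)})+\l(t,Y_t)Z_t^2\big)dt-Z_tdW_t,\qquad Y_T=\xi_T.$$
Applying It\^o's formula to $P_t=\varphi(t,Y_t)$ and using $\varphi_{yy}-2\l\varphi_y=0$, the term $\l(t,Y_t)Z_t^2$ cancels exactly and $(P_\cd,Q_\cd)$ solves $-dP_t=G(t,P_t)dt-Q_tdW_t$, $P_T=\varphi(T,\xi_T)$, with
$$G(t,p)\deq\varphi_y\big(t,\varphi^{-1}(t,p)\big)f\big(t,\varphi^{-1}(t,p),U_{t+\d(t)},V_{t+\z(t)}\big)-\varphi_t\big(t,\varphi^{-1}(t,p)\big).$$
By Assumption~\ref{8.8.20.49} and the bounds on $\varphi$, $G$ is independent of the control variable, globally Lipschitz in $p$ with a constant independent of $(U_\cd,V_\cd)$, and satisfies $|G(t,0)|\les C$; hence this Lipschitz BSDE of linear growth has a unique solution in $S^2_\dbF\ts L^2_\dbF$ for the $L^2$ terminal datum $\varphi(T,\xi_T)$, yielding a well-defined map $\G:\cA\to\cA$, $\G(U_\cd,V_\cd)\deq(Y_\cd,Z_\cd)$, whose fixed points are exactly the solutions of \rf{8.8.20.25}.

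Then I would prove $\G$ has a unique fixed point, carrying this out first for bounded approximations $(\xi^n_\cd,\eta^n_\cd)$ of $(\xi_\cd,\eta_\cd)$, for which $P^n_\cd$ is bounded and $Q^n\cd W$ is a $BMO$-martingale (Proposition~\ref{2.1.13}). Comparing $\G(U,V)$ and $\G(\bar U,\bar V)$ in the transformed variables, $\D P\deq\varphi(\cd,Y)-\varphi(\cd,\bar Y)$ has zero terminal value and solves a BSDE whose driver difference is, uniformly, bounded by $C\big(|\D P_t|+\dbE_t[|U_{t+\d(t)}-\bar U_{t+\d(t)}|+|V_{t+\z(t)}-\bar V_{t+\z(t)}|]\big)$; the standard a priori estimate together with items (i)--(ii) for $\d(\cd),\z(\cd)$ yields, on a short interval $[T-\e,T]$, bounds on $\|\D P\|$ and $\|\D Q\|$ that are small multiples of $\|(U,V)-(\bar U,\bar V)\|$, and bi-Lipschitz of $\varphi$ transfers this to $\D Y$. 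To reach the full interval $[0,T]$ one restarts from $Y_{T-\e}$ and iterates finitely many times, the step length being kept bounded below by the deterministic a priori bound on $Y_\cd$ exactly as in the proof of Theorem~\ref{4.1.5}; uniqueness follows from the same difference estimate. Finally, for the general (merely $S^2\ts L^2$) terminal datum one passes to the limit $n\to\i$, using the a priori $S^2\ts L^2$ estimates and the $L^2$-stability of the transformed Lipschitz equation.

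The main obstacle is the interaction between the change of variables and the anticipated $Z$-term. When increments are measured in the original variables, the passage from $\D Q$ to $\D Z=Z-\bar Z$ via $Z=Q/\varphi_y(\cd,\varphi^{-1}(\cd,P))$ produces a term of the form $|\bar Q_{t+\z(t)}|\,|Y_{t+\z(t)}-\bar Y_{t+\z(t)}|$, whose effective Lipschitz constant grows with the anticipated $Z$-integrand; since $\xi_\cd$ is only in $S^2$, $Q_\cd$ need not generate a $BMO$-martingale, so this term cannot be absorbed by a pointwise conditional-expectation estimate as in Sections~3--4. Controlling it requires both the boundedness of $f$ in Assumption~\ref{8.8.20.49} (so that $\varphi_y f$, hence the transformed generator $G$, is bounded) and the $\e$-smallness of the working interval; making this quantitative, and checking that the step length does not degenerate along the patching while the bounded-data approximation remains stable, is the technical heart of the argument.
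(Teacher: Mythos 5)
Your starting point --- the change of variables $\bar Y_t=\varphi(t,Y_t)$, $\bar Z_t=\varphi_y(t,Y_t)Z_t$ from Lemma \ref{8.8.20.34}, using $\varphi_{yy}-2\l\varphi_y=0$ to cancel the quadratic term --- is exactly the paper's. The divergence is in what happens next. The paper transforms the \emph{entire} anticipated equation, anticipated arguments included, into the Lipschitz anticipated BSDE \rf{3.48} and then simply invokes Theorem 4.2 of Peng and Yang \cite{Peng-Yang-09}; it therefore never has to undo the change of variables inside an estimate, and none of your additional machinery (bounded approximations of the data, BMO bounds on $Q^n$, local contraction plus patching with a non-degenerating step) is needed, since a Lipschitz anticipated BSDE with $S^2_\dbF\times L^2_\dbF$ data is solvable globally by a single standard contraction. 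You instead freeze the anticipated arguments in the \emph{original} variables, so to close your fixed point you must convert the estimates on $(\Delta P,\Delta Q)$ back into estimates on $(\Delta Y,\Delta Z)$, because the next iterate's driver reads the original $V_{t+\z(t)}=Z_{t+\z(t)}$.

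That conversion is where the argument genuinely fails, as you yourself flag without resolving. Writing
$$\Delta Z_s=\frac{\Delta Q_s}{\varphi_y(s,Y_s)}+\bar Q_s\Big(\frac{1}{\varphi_y(s,Y_s)}-\frac{1}{\varphi_y(s,\bar Y_s)}\Big),$$
the second term is of size $|\bar Q_s|\,|\Delta Y_s|$ (and even this step already needs $\l$ bounded so that $1/\varphi_y$ is Lipschitz in $y$, which the hypotheses of Lemma \ref{8.8.20.34} do not provide). Its $L^2(dt\otimes d\dbP)$ norm is controlled only by $\dbE\big[\sup_s|\Delta Y_s|^2\int_{T-\e}^T|\bar Q_s|^2ds\big]$, which is \emph{not} dominated by $\|\Delta Y\|^2_{S^2}\,\|\bar Q\|^2_{L^2}$; one would need either $\bar Q\cd W\in BMO(\dbP)$ or $S^4\times L^4$ integrability, neither of which is available when the terminal data are merely in $S^2_\dbF\times L^2_\dbF$. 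Shrinking $\e$ does not help, because $\int_{T-\e}^T|\bar Q_s|^2ds$ has no uniform smallness; boundedness of $f$ bounds the generator but not this cross term; and the bounded-approximation route requires the contraction constant to be uniform in $n$, which is exactly what the cross term destroys. The repair is to keep the whole fixed-point argument in the transformed variables, i.e., to freeze $(\bar Y_{\cd+\d(\cd)},\bar Z_{\cd+\z(\cd)})$ rather than $(Y_{\cd+\d(\cd)},Z_{\cd+\z(\cd)})$, so that the division by $\varphi_y(\cd,Y_\cd)$ never reappears inside the contraction estimate --- which is precisely what the reduction to \rf{3.48} and the appeal to Peng--Yang accomplish in one stroke.
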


\begin{example} \rm
The following coefficients satisfy the assumptions of Theorem \ref{8.8.21.2}: For every $s\in[0,T]$, $y\in\dbR$,
$\xi_{\cd},\eta_{\cd}\in L^2_{\dbF}(s,T+K;\dbR)$, $r,\bar{r}\in[s,T+K]$,
$$\ba{ll}
\ds f(s,y,\xi_r,\eta_{\bar r})=1+|\sin(y)|+\dbE_s\big[|\cos(\xi_r)|+|\cos(\eta_{\bar r})|\big],\\
\ns\ds \l(s,y)z^2=e^{-y^2}s\cd z^2.\\
\ea$$
\end{example}

\begin{proof}[Proof of Theorem \ref{8.8.21.2}]
In order to solve the equation \rf{8.8.20.25}, let $\p(\cd)$ be as follows,
$$\p(t,y)\triangleq\int_0^y\exp\Big(2\int_0^s\l(t,r)dr\Big)ds,
\qq~\forall~t\in[0,T],~y\in\dbR.$$
Applying It\^{o} formula to $\p(t,Y_t)$ on $[0,T]$, and note that $\p_{yy}(t,y)-2h(t,y)\p_{y}(t,y)=0$, we have
$$\ba{ll}
%
%
\ns\ds \p(t,Y_t)=\p(T,\xi_T)
+\int_t^T\Big(\p_y(r,Y_r)f(r,Y_r,Y_{r+\d(r)},Z_{r+\z(r)})-\p_t(r,Y_r)\Big)dr\\
\ns\ds\qq\qq \ \ \ \ -\int_t^T\p_y(r,Y_r)Z_rdW_r,\qq t\in[0,T].
\ea$$
Now, if we let
$$\bar{Y}_t\triangleq\p(t,Y_t),\q~\bar{Z}_t\triangleq\p_y(t,Y_t)Z_t,\q~ t\in[0,T],$$
then the equation \rf{8.8.20.25} can be transformed formally to the following classical anticipated BSDE:
\bel{3.48}\bar{Y}_t=\bar \xi_T+\int_t^T\of(r,\bar{Y}_r,\bar{Y}_{r+\d(r)},\bar{Z}_{r+\z(r)})dr
-\int_t^T\bar{Z}_rdW_r,\qq t\in[0,T],\ee
where
$$\2n\ba{ll}
\ds\of(s,y,\xi_r,\eta_{\bar r})
=\p_y(s,\p^{-1}(s,y))f\(s,\p^{-1}(s,y),\p^{-1}(r,\xi_r),
\frac{\eta_{\bar r}}{\p_y(s,\p^{-1}(s,y))}\)\\
\ns\ds\qq\qq\qq\q -\p_s(s,\p^{-1}(s,y)),\qq r,\bar r\in[s,T+K],\ s\in[t,T],\\
\ns\ds\qq\q\ \  \bar \xi(s)=\p^{-1}(s,\xi_s), \qq s\in[T,T+K].
\ea$$
From Lemma \ref{8.8.20.34}, $\p_{y}(\cd)$ is a positive bounded function, and both $\p$ and its inverse $\p^{-1}(\cd)$ are smooth functions which are globally Lipschitz, we then deduce that \rf{8.8.20.25} admits an adapted solution if and only if \rf{3.48} admits an adapted solution. Moreover,  under Assumption \ref{8.8.20.49}, it is easy to see that $\of(\cd)$ is Lipschitz continuous in all of it arguments. Furthermore, since $\xi_\cd\in S_\dbF^2(0,T+K;\dbR)$ and $\p^{-1}(\cd)$ is globally Lipschitz, it follows that $ \bar \xi_\cd$ is also belongs to $S_\dbF^2(0,T+K;\dbR)$. Now, from Theorem 4.2 of Peng and Yang \cite{Peng-Yang-09}, the classical anticipated BSDE \rf{3.48} admits a unique adapted solution $(\bar{Y},\bar{Z})\in S_\dbF^2(0,T+K;\dbR)\times L^2_\dbF(0,T+K;\dbR)$, which implies that BSDE \rf{8.8.20.25} also admits unique adapted solution. In fact, the triple $(Y_\cd,Z_\cd)$ defined by
$$Y_t\triangleq \p^{-1}(t,\bar{Y}_t),\qq
Z_t\triangleq\frac{\bar{Z}_)}{\p_y(t,\p^{-1}(t,\bar{Y}_t))},\q~t\in[0,T],$$
is the unique adapted solution of BSDE \rf{8.8.20.25}. This completes the proof.
\end{proof}

\section*{Acknowledgements}

The authors would like to thank Professor Shige Peng for his helpful discussions and comments.

\end{document}